\documentclass[11pt]{article}
\usepackage[utf8]{inputenc}
\usepackage[top=2.54cm, bottom=2.54cm, left=2.5cm, right=2.5cm]{geometry}
\usepackage[tbtags]{amsmath}
\usepackage{amssymb}
\usepackage{amsthm}
\usepackage{appendix}
\usepackage{latexsym}
\usepackage{mathrsfs}
\usepackage{xcolor}
\usepackage{wasysym}
\usepackage{fancyhdr}
\usepackage[colorlinks=true, linkcolor=red, citecolor=blue, urlcolor=magenta]{hyperref}
\usepackage{float}
\usepackage{graphicx}

\usepackage[numbers,sort&compress]{natbib}
\usepackage{pict2e}
\usepackage{tikz}
\usepackage{tikz-3dplot}
\usepackage{tkz-graph}
\usetikzlibrary{arrows,shapes,chains}
\usepackage{enumerate}
\usepackage{verbatim}

\allowdisplaybreaks[4]

\renewcommand{\phi}{\varphi}
\renewcommand{\epsilon}{\varepsilon}

\newtheorem{theorem}{Theorem}[section]
\newtheorem{lemma}[theorem]{Lemma}
\newtheorem{conjecture}[theorem]{Conjecture}
\newtheorem{problem}[theorem]{Problem}

\newtheorem{corollary}[theorem]{Corollary}
\theoremstyle{definition}

\newtheorem{claim}[theorem]{Claim}

\begin{document}

\title{Hypergraph Erd\H{o}s--Rogers functions with consecutive clique sizes}

\author{
Qizhong Lin\footnote{Center for Discrete Mathematics, Fuzhou University,
Fuzhou, 350108 P.~R.~China. Email: {\tt linqizhong@fzu.edu.cn}. Supported in part by National Key R\&D Program of China (Grant No. 2023YFA1010202) and NSFC (No.\ 12571361).} 
\;\; and\;\;
Lin Niu\footnote{Center for Discrete Mathematics, Fuzhou University, Fuzhou, 350108 P.~R.~China. Email: {\tt 1539166573@qq.com}.} 
}
\date{}

\maketitle

\begin{abstract}
For integers \(k\le s<t\), the hypergraph Erd\H{o}s--Rogers function
\(f^{(k)}_{s,t}(n)\) is the largest integer \(m\) such that every
\(n\)-vertex \(K_t^{(k)}\)-free \(k\)-graph contains a set of \(m\) vertices
spanning no copy of \(K_s^{(k)}\). We prove that, for every fixed \(s\ge4\),
\[
        f^{(4)}_{s,s+1}(n)=(\log n)^{o(1)},
\]
thereby resolving a problem posed by Conlon, Fox and Sudakov.
The key input is a new \(3\)-uniform estimate: 
\(f^{(3)}_{s,s+1}(n)=O(\frac{\log n}{\log\log n})\) for every fixed \(s\ge3\), which improves the
logarithmic upper bound of Dudek and Mubayi. The proof develops a probabilistic
pair-coloring construction based on a robust auxiliary palette and hypergraph
containers. As a further consequence, we obtain
\(f^{(k)}_{k+1,k+2}(n)=(\log_{(k-3)} n)^{o(1)}\) for every fixed \(k\ge5\),
making substantial progress towards a conjecture of Mubayi and Suk.
\end{abstract}

\section{Introduction}
\label{sec:introduction}

A recurring theme in Ramsey theory is to understand what structure is forced to
remain after a larger configuration has been forbidden.  The Erd\H{o}s--Rogers
function gives a natural quantitative form of this question: in a host graph or
hypergraph avoiding a prescribed clique, how large a vertex set must one find
that already avoids a smaller clique?

For integers \(k\le s<t\), let \(K_t^{(k)}\) denote the complete
\(k\)-uniform hypergraph, or \(k\)-graph, on \(t\) vertices.  We define
\(f_{s,t}^{(k)}(n)\) to be the largest integer \(m\) such that every
\(n\)-vertex \(K_t^{(k)}\)-free \(k\)-graph contains a set of \(m\) vertices
spanning no copy of \(K_s^{(k)}\).  When \(s=k\), a \(K_k^{(k)}\)-free set is
precisely an independent set.  Thus \(f_{k,t}^{(k)}(n)\ge m\) if and only if
every \(n\)-vertex \(K_t^{(k)}\)-free \(k\)-graph contains an independent set
of size \(m\), or equivalently
$
        r(K_t^{(k)},K_m^{(k)})\le n.
$
In this sense, \(f_{k,t}^{(k)}(n)\) is the inverse form of the corresponding
off-diagonal Ramsey number. Thus the Erd\H{o}s--Rogers function enters the non-Ramsey setting at \(s\ge k+1\).

For \(k=2\), the function \(f_{s,t}^{(2)}(n)\) was introduced by
Erd\H{o}s and Rogers~\cite{ER}, who asked how large an induced
\(K_s\)-free subgraph must appear in every \(n\)-vertex \(K_t\)-free graph.
The adjacent case \(t=s+1\) is the most restrictive one and has subsequently been studied
by Bollob\'as and Hind~\cite{BH}, Krivelevich~\cite{K94,K95}, Alon and
Krivelevich~\cite{AK}, Dudek and R\"odl~\cite{DR2011},
Wolfovitz~\cite{W2013}, Dudek, Retter and R\"odl~\cite{DRR},
Mubayi and Verstra\"ete~\cite{MV25}, and others. For \(s=2\), the classical results of Ajtai, Koml\'os and
Szemer\'edi~\cite{AKS} and Kim~\cite{Kim} imply that
\[
        f_{2,3}^{(2)}(n)
        =
        \Theta\bigl(\sqrt{n\log n}\bigr).
\]
Very recently, Morris, Sahasrabudhe and Verstra\"ete~\cite{MSV26}
proved an upper bound matching the lower bound that follows from a theorem
of Joret, Micek, Reed and Smid~\cite{JMRS}, thereby showing that, for every fixed \(s\ge3\),
\[
        f_{s,s+1}^{(2)}(n)
        =
        \Theta\bigl(\sqrt{n\log n}\bigr).
\]
The non-adjacent range \(t\ge s+2\) has a rather different flavour; see,
for example, \cite{DRR,GJ2020,JS2025} and, for generalized
Erd\H{o}s--Rogers functions, \cite{BCL,MV26,GJS}.

The graph case is therefore governed by a polynomial scale in \(n\).  For
hypergraphs the situation is qualitatively different: already in the \(3\)-uniform
case the natural scale is logarithmic, while in higher uniformities iterated
logarithms begin to appear.  The hypergraph Erd\H{o}s--Rogers function was first
considered by Dudek and Mubayi~\cite{DM}, who proved that for fixed integers
\(s\ge k\ge3\), there are constants \(c_1,c_2>0\) such that
\begin{align}\label{bound-DM}
 c_1\left(\log_{(k-2)} n\right)^{1/4}
 \le
 f_{s,s+1}^{(k)}(n)
 \le
 c_2(\log n)^{1/(k-2)}.
\end{align}
For $k=3$, they in fact obtained a sharper lower bound with an extra factor \((\frac{\log\log n}{\log\log\log n})^{1/2}\).
Here and throughout, \(\log_{(0)}x=x\), and
\(\log_{(i+1)}x=\log(\log_{(i)}x)\) denotes the iterated logarithm.
Conlon, Fox and Sudakov~\cite{CFS14}, building on their earlier work on
hypergraph Ramsey numbers~\cite{CFS10}, improved the lower bound to
\[
        f_{s,s+1}^{(k)}(n)
        \ge
        \left(\log_{(k-2)} n\right)^{1/3-o(1)}
\]
for every fixed \(s\ge k\ge3\).

The first significant barrier arises already in the \(3\)-uniform case.  Here
the upper bound of Dudek and Mubayi is logarithmic in \(n\), and no general
\(o(\log n)\) estimate had been known.  Since the available lower bounds rule
out a subpower bound in \(\log n\), the natural objective is to obtain a
nontrivial improvement over the logarithmic scale.

The situation in the \(4\)-uniform case is substantially different.
Although a bound of the form \((\log n)^{o(1)}\) is not ruled out by the
known lower bounds, prior to the present work the best general upper bound
was still a fixed positive power of \(\log n\). This led Conlon, Fox and
Sudakov~\cite[Problem~3.36]{CFS15} to pose the following problem, which was later
repeated by He and Nie~\cite[Problem~4.9]{HN}.

\begin{problem}[Conlon, Fox and Sudakov~\cite{CFS15}]\label{pbm}
For every fixed \(s\ge4\), is it the case that
\[
        f_{s,s+1}^{(4)}(n)=(\log n)^{o(1)}?
\]
\end{problem}

Problem~\ref{pbm} is closely related to a broader question arising
from the work of Dudek and Mubayi~\cite{DM}. They asked whether the
power-of-\(\log n\) upper bound in \eqref{bound-DM} could be substantially
improved, with iterated-logarithmic behaviour as the natural target.
In the diagonal adjacent case \(s=k+1\), Mubayi and Suk~\cite{MS} later
made this prediction explicit.  They proved that, for each \(k\ge14\),
\[
        f_{k+1,k+2}^{(k)}(n)
        =
        O\bigl(\log_{(k-13)}n\bigr),
\]
and conjectured that the correct order should involve \(k-2\) iterations
of the logarithm.

\begin{conjecture}[Mubayi and Suk~\cite{MS}]\label{conj:MS}
For each \(k\ge3\),
\[
        f_{k+1,k+2}^{(k)}(n)
        =
        \left(\log_{(k-2)}n\right)^{\Theta(1)}.
\]
\end{conjecture}

For \(k=4\), Conjecture~\ref{conj:MS} predicts
$
        f_{5,6}^{(4)}(n)
        =
        (\log\log n)^{\Theta(1)},
$
which is stronger than the \(s=5\) instance of
Problem~\ref{pbm}.  Thus the problem of Conlon, Fox and Sudakov may be viewed as a first
step towards the iterated-logarithmic scale predicted by the
Mubayi--Suk conjecture: it asks only for a subpower bound in \(\log n\).

Subsequently, Fan, Hu, Lin and Lu~\cite{FHLL} proved that for every fixed
\(k\ge5\),
\[
        f_{k+1,k+2}^{(k)}(n)
        =
        O\bigl(\log_{(k-3)}n\bigr).
\]
This comes within one logarithmic iteration of Conjecture~\ref{conj:MS}.  Several
recent works have studied related generalized Erd\H{o}s--Rogers functions in
\(k\)-graphs; see, for example, \cite{DHLW26S,DHLW26N,HN,DL26}.

We prove two main results.  The first establishes a new \(3\)-uniform estimate
that breaks the previously known logarithmic barrier in the adjacent problem.  The second uses
this estimate as the base case of a monotone stepping-up argument to resolve
the \(4\)-uniform problem of Conlon, Fox and Sudakov.

\begin{theorem}\label{thm:main-3}
For every fixed integer \(s\ge3\), there exists a constant \(C_s>0\) such that,
for all sufficiently large \(n\),
\[
        f_{s,s+1}^{(3)}(n)
        \le
        C_s\frac{\log n}{\log\log n}.
\]
\end{theorem}

Theorem~\ref{thm:main-3} improves the logarithmic upper bound of Dudek and
Mubayi by a factor of \(\log\log n\).  Its proof combines a robust
auxiliary \(K_s\)-free palette with an independent-set encoding of bad
local pair-colorings and a recursive hypergraph-container argument.  We
describe the underlying mechanism in more detail in the proof overview
below.

When \(s=3\), Theorem~\ref{thm:main-3} recovers, in inverse form, the following
best-known lower bound for \(r(K_4^{(3)},K_m^{(3)})\), due to Conlon, Fox and
Sudakov~\cite{CFS10}.

\begin{corollary}\label{cor:cfs-K4-lower}
For all sufficiently large \(m\),
\[
        r(K_4^{(3)},K_m^{(3)})
        \ge
        2^{\Omega(m\log m)}.
\]
\end{corollary}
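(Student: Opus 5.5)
We derive Corollary~\ref{cor:cfs-K4-lower} from the case \(s=3\) of Theorem~\ref{thm:main-3}, using the identification from the introduction: for \(s=k=3\), a \(K_3^{(3)}\)-free vertex set is exactly an independent set. So \(f^{(3)}_{3,4}(n)\) is the largest \(m\) such that every \(n\)-vertex \(K_4^{(3)}\)-free \(3\)-graph has an independent set of size \(m\). Equivalently, \(f^{(3)}_{3,4}(n)<m\) holds if and only if some \(n\)-vertex \(3\)-graph has neither a \(K_4^{(3)}\) nor an independent set of size \(m\). That is the statement \(r(K_4^{(3)},K_m^{(3)})>n\).

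Fix \(C=C_3\) from Theorem~\ref{thm:main-3}, and let \(n_0\) be such that the theorem applies for all \(n\ge n_0\). Given a large \(m\), set
\[
n=\Bigl\lfloor 2^{c\,m\log m}\Bigr\rfloor,
\]
with a small constant \(c>0\) to be chosen depending only on \(C\). Then \(\log n\le c\,m\log m\cdot\log 2\). Also \(\log\log n\ge \log m\) once \(m\) is large, because \(\log n\ge \tfrac c2 m\log m\ge m\) eventually. Hence
\[
C\frac{\log n}{\log\log n}\le C c\log 2\cdot m<m
\]
as soon as \(c<1/(C\log 2)\), and the function \(x/\log x\) is increasing in the relevant range, so these substitutions are legitimate.

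Since \(n\ge n_0\) for large \(m\), Theorem~\ref{thm:main-3} gives \(f^{(3)}_{3,4}(n)<m\). By the equivalence above, there is an \(n\)-vertex \(K_4^{(3)}\)-free \(3\)-graph with no independent set of size \(m\). Therefore
\[
r(K_4^{(3)},K_m^{(3)})>n\ge 2^{c\,m\log m}-1=2^{\Omega(m\log m)}.
\]

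There is no real obstacle here. The only points needing care are the direction of the inverse relation between \(f\) and \(r\), and the estimate \(\log\log n\ge(1-o(1))\log m\) for \(n=2^{\Theta(m\log m)}\), which is what lets the \(\log\log n\) saving in Theorem~\ref{thm:main-3} become the extra \(\log m\) factor in the exponent.
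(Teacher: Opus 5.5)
Your proof is correct and follows the same route as the paper: apply Theorem~\ref{thm:main-3} with \(s=3\) at \(n=\lfloor 2^{cm\log m}\rfloor\), then read off \(r(K_4^{(3)},K_m^{(3)})>n\) from the inverse relation between \(f^{(3)}_{3,4}\) and the Ramsey number. You make explicit the estimates \(\log n=O(cm\log m)\) and \(\log\log n\ge\log m\) that the paper leaves implicit; the remark about \(x/\log x\) being increasing is unnecessary, since you bound the numerator and denominator separately.
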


\begin{proof}
Theorem~\ref{thm:main-3} with \(s=3\) gives
\(f_{3,4}^{(3)}(n)\le C\log n/\log\log n\).  Equivalently, for
\(n=\lfloor 2^{cm\log m}\rfloor\) with \(c>0\) sufficiently small, there
exists a \(K_4^{(3)}\)-free \(3\)-graph on \(n\) vertices with no independent
set of size \(m\).  Hence the claimed lower bound follows.
\end{proof}

Using a monotone stepping-up construction, we lift
Theorem~\ref{thm:main-3} to the \(4\)-uniform setting and obtain the
following subpower bound.

\begin{theorem}\label{thm:main-4}
For every fixed integer \(s\ge4\), there exists a constant \(C_s>0\) such that,
for all sufficiently large \(n\),
\[
        f_{s,s+1}^{(4)}(n)
        \le
        \exp\left(
        C_s\frac{\log\log n}{\log\log\log n}
        \right).
\]
In particular,
\[
        f_{s,s+1}^{(4)}(n)=(\log n)^{o(1)}.
\]
\end{theorem}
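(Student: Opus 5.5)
We prove Theorem~\ref{thm:main-4} with an Erd\H{o}s--Hajnal type stepping-up construction from $3$-graphs to $4$-graphs, taking as base hypergraph the one supplied by Theorem~\ref{thm:main-3}. The scales are set as follows. Choose $N$ with $\log n\approx N$, i.e.\ $n=2^N$. By Theorem~\ref{thm:main-3} (with clique size $s-1\ge3$, and also with $s$ if needed) there is a $K_{s}^{(3)}$-free $3$-graph $H$ on vertex set $[N-1]$ in which every $K_{s-1}^{(3)}$-free set has size at most $m_0=C\log N/\log\log N$.

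\textbf{Construction.} Identify vertices of the $4$-graph $G$ with $\{0,1\}^N$, ordered lexicographically. For $a<b$ put $\delta(a,b)=\max\{i:a_i\ne b_i\}$. For $a_1<\dots<a_r$ let $\delta_i=\delta(a_i,a_{i+1})$; standard facts give $\delta_i\ne\delta_{i+1}$, and $\delta(a_i,a_j)=\max_{i\le l<j}\delta_l$. Call a $4$-set $a_1<a_2<a_3<a_4$ an edge if $\delta_1,\delta_2,\delta_3$ are \emph{monotone} (increasing or decreasing) and $\{\delta_1,\delta_2,\delta_3\}\in E(H)$. This is the ``monotone stepping-up'' referred to in the introduction.

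\textbf{Key steps.} First, $G$ is $K_{s+1}^{(4)}$-free. Suppose $a_1<\dots<a_{s+1}$ span a $4$-clique. Every consecutive quadruple has a monotone $\delta$-triple, so the whole sequence $\delta_1,\dots,\delta_s$ is monotone, say increasing, with distinct values. For any $i<j<k$, the quadruple $a_i,a_{i+1},a_{j+1},a_{k+1}$ has $\delta$-values $\delta_i,\delta_j,\delta_k$ by monotonicity, so $\{\delta_1,\dots,\delta_s\}$ is an $s$-clique in $H$, a contradiction. Second, suppose $S\subseteq V(G)$ has $|S|=m$ and spans no $K_s^{(4)}$. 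Sort $S$ and look at its $\delta$-sequence. By the Erd\H{o}s--Szekeres type argument used in stepping-up (tracking local monotone runs, or passing to a subset via the binary ``split tree'' of $S$), one extracts either a long monotone run of $\delta$'s, or a deep structure that forces recursion. A monotone run $a_1<\dots<a_{r+1}$ with $\delta$-values $D=\{\delta_1,\dots,\delta_r\}$ spans a $K_s^{(4)}$ exactly when $D$ contains an $(s-1)$-clique of $H$, by the computation above. Hence $D$ is $K_{s-1}^{(3)}$-free and $r\le m_0$. So $G$ has no $K_s^{(4)}$-free set of size $m$ provided every set of size $m$ contains a monotone run of length about $m_0$.

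This is the main obstacle. A set of size $m$ in the lexicographic tree is only guaranteed a monotone $\delta$-run of length about $\log m$, not $m$. With $m=2^{m_0}$ this gives the bound $f\le 2^{O(\log N/\log\log N)}=\exp(C\log\log n/\log\log\log n)$, which is exactly the target. So the plan is to prove a lemma: every $m$-subset of $\{0,1\}^N$ contains $a_1<\dots<a_{r+1}$ with monotone $\delta$-sequence and $r\ge c\log m$. The proof considers the binary tree of splitting coordinates restricted to $S$, whose depth or branching yields a path. A root-to-leaf path of length $\ge\log_2 m$ exists, and the choice of leftmost or rightmost descendants along the path yields a monotone run; one must take care that the chosen points realize exactly the path's split coordinates.

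We then verify that $\{\delta_i\}$ lies in $[N-1]$ and that $H$ may be placed on those coordinates. Combining, $f^{(4)}_{s,s+1}(n)\le 2^{O(m_0)}=\exp(C_s\log\log n/\log\log\log n)$, and in particular $f^{(4)}_{s,s+1}(n)=(\log n)^{o(1)}$. If the monotone-only edge rule turns out to allow cliques along nonmonotone patterns, we would add the Erd\H{o}s--Hajnal local max/min patterns as non-edges, and the clique-freeness check remains the same.
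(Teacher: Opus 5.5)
Your proposal is correct and is essentially the paper's proof. It uses the same monotone stepping-up $4$-graph on $\{0,1\}^N$, built from the extremal $3$-graph for $f^{(3)}_{s-1,s}(N)$; that $3$-graph should sit on $[N]$, the range of $\delta$, and $\{0,1\}^N$ should be ordered so that the top coordinate is most significant. It also uses the same $K_{s+1}^{(4)}$-freeness argument and the same observation that a strictly monotone $\delta$-chain in a $K_s^{(4)}$-free set has at most $\rho=f^{(3)}_{s-1,s}(N)$ steps.

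Your split-tree lemma is the paper's Pascal bound $|A|\le\binom{I(A)+D(A)}{I(A)}$ in another guise, since that induction splits at the maximal $\delta$, which is the root of your tree. To complete your sketch, take a root-to-leaf path with $L\ge\log_2 m$ branching nodes. Pick one point in the off-path child only at the nodes where the path turns in the majority direction, and add the leaf. This gives a strictly monotone $\delta$-chain of at least $L/2$ steps, hence $\log_2 m\le L\le 2\rho$ and $m\le 4^{\rho}$, exactly the paper's bound.
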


Theorem~\ref{thm:main-4} resolves Problem~\ref{pbm} by establishing,
for every fixed \(s\ge4\), the first general upper bound that is subpower
in \(\log n\).  This significantly improves the
\(O((\log n)^{1/2})\) bound of Dudek and Mubayi.  In the special case
\(s=4\), the inverse relation with the Ramsey number
\(r(K_5^{(4)},K_m^{(4)})\), together with recent
work~\cite{DHLW26,FLLN26}, yields the stronger estimate
$
        f_{4,5}^{(4)}(n)
        =
        O\bigl((\log\log n)^5\bigr).
$
This Ramsey-theoretic argument is specific to \(s=4\) and does not
currently provide corresponding bounds for \(s\ge5\).  In contrast,
Theorem~\ref{thm:main-4} treats the adjacent problem uniformly for every
fixed \(s\ge4\).

The \(4\)-uniform estimate has a further consequence in higher uniformities.
Combining Theorem~\ref{thm:main-4} with the stepping-up recursions of Fan, Hu,
Lin and Lu~\cite{FHLL}, we obtain the following.

\begin{corollary}\label{cor:main-k}
For every fixed integer \(k\ge5\), there exists a constant \(C_k>0\) such that,
for all sufficiently large \(n\),
\[
       f_{k+1,k+2}^{(k)}(n)
        \le
        \exp\left(
        C_k
        \frac{\log_{(k-2)} n}
             {\log_{(k-1)} n}
        \right).
\]
In particular,
$
        f_{k+1,k+2}^{(k)}(n)
        =
        \big(\log_{(k-3)} n\big)^{o(1)}.
$
\end{corollary}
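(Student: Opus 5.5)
The plan is to prove Corollary~\ref{cor:main-k} by induction on \(k\ge5\). The base input is Theorem~\ref{thm:main-4} with \(s=5\), namely
\[
f^{(4)}_{5,6}(n)\le \exp\bigl(C_5\log_{(2)}n/\log_{(3)}n\bigr).
\]
At each step we apply the stepping-up recursion of Fan, Hu, Lin and Lu~\cite{FHLL}. As I understand it, that recursion converts a \(K^{(k-1)}_{k+1}\)-free \((k-1)\)-graph on \(N\) vertices with no \(K^{(k-1)}_{k}\)-free set of size \(m\) into a \(K^{(k)}_{k+2}\)-free \(k\)-graph on \(n=2^{N}\) vertices, or perhaps \(n=2^{cN}\). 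In that \(k\)-graph every \(K^{(k)}_{k+1}\)-free set has size at most \(O(m)\), or at worst \(m^{O(1)}\). This is the form of their argument that produced \(O(\log_{(k-3)}n)\) from the Dudek--Mubayi bound \(O((\log n)^{1/2})\) at \(k=4\), losing one logarithm per step from \(k=4\) upward. In inequality form it reads
\[
f^{(k)}_{k+1,k+2}(2^{N})\le C\, f^{(k-1)}_{k,k+1}(N)^{C}.
\]
I will first extract this statement precisely from~\cite{FHLL}, including the case \(k=5\) fed with \(f^{(4)}_{5,6}\). That case is exactly the \(s=5\) instance covered by Theorem~\ref{thm:main-4}.

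Given the recursion, the computation is routine. For \(n\) large put \(N=\lceil\log_2 n\rceil\), so \(\log_{(i)}N=\log_{(i+1)}n+O(1)\) for each fixed \(i\). By induction, assume
\[
f^{(k-1)}_{k,k+1}(N)\le \exp\bigl(C_{k-1}\log_{(k-3)}N/\log_{(k-2)}N\bigr).
\]
Then
\[
f^{(k)}_{k+1,k+2}(n)\le C\exp\bigl(C\,C_{k-1}\log_{(k-2)}n/\log_{(k-1)}n\,(1+o(1))\bigr),
\]
which is at most \(\exp\bigl(C_k\log_{(k-2)}n/\log_{(k-1)}n\bigr)\) for a suitable \(C_k\). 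The multiplicative constant \(C\) is absorbed because the exponent tends to infinity, and the polynomial loss only rescales the constant in the exponent. Monotonicity of \(f\) in \(n\) handles the rounding when \(n\) is not exactly \(2^{N}\). For \(k=5\) the base is Theorem~\ref{thm:main-4} with \(s=5\), giving \(\exp(C\log_{(3)}n/\log_{(4)}n)\), which is the claimed shape.

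The ``in particular'' statement follows because
\[
\exp\bigl(C_k\log_{(k-2)}n/\log_{(k-1)}n\bigr)=\bigl(\log_{(k-3)}n\bigr)^{C_k/\log_{(k-1)}n},
\]
and the exponent \(C_k/\log_{(k-1)}n\) tends to \(0\).

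The main obstacle is confirming that the~\cite{FHLL} stepping-up genuinely applies in the adjacent-clique setting \((k,k+1)\to(k+1,k+2)\) starting from uniformity \(4\) with an arbitrary input graph. Their original chain may have started from a specific construction, such as a tournament-based coloring or the \(k=4\) Dudek--Mubayi bound, rather than from an arbitrary \(K^{(4)}_6\)-free \(4\)-graph. If their lemma needs extra structure from the base graph, I will check that the construction behind Theorem~\ref{thm:main-4} supplies it. Alternatively, I will redo their step directly with the monotone stepping-up used in the paper: color \(k\)-sets of binary strings by the \((k-1)\)-graph on the \(\delta\)-sequence when it is monotone, and otherwise use a fixed local rule. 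Then I will verify that \(K^{(k)}_{k+2}\)-freeness and the \(K^{(k)}_{k+1}\)-free-set bound transfer with only polynomial loss.
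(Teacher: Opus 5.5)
Your plan is correct and matches the paper's proof. The paper iterates exactly the Fan--Hu--Lin--Lu recursions, in the form $f^{(5)}_{6,7}(2^N)\le C\bigl(f^{(4)}_{5,6}(N)\bigr)^2$ and $f^{(k+1)}_{k+2,k+3}(2^N)\le C_k f^{(k)}_{k+1,k+2}(N)$, which is a special case of your polynomial-loss recursion; because these are inequalities between the functions themselves, they apply to any input graph. It starts from Theorem~\ref{thm:main-4} with $s=5$ and absorbs constants as you do. Do not use this paper's monotone stepping-up as your fallback: its Pascal-type bound $\binom{2\rho}{\rho}$ loses exponentially rather than polynomially, and that would already destroy the claimed bound at the $4\to5$ step.
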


Corollary~\ref{cor:main-k} gives the first upper bound of the form
\(\bigl(\log_{(k-3)}n\bigr)^{o(1)}\) for \(f_{k+1,k+2}^{(k)}(n)\), thereby
making further progress towards Conjecture~\ref{conj:MS}.  It improves the
previous \(O(\log_{(k-3)}n)\) bound of Fan, Hu, Lin and
Lu~\cite[Theorem~4.4]{FHLL}.

\medskip
The paper is organized as follows.  Section~\ref{sec:preliminaries}
collects notation and the probabilistic and container tools used later.
Section~\ref{sec:3-graphs} proves Theorem~\ref{thm:main-3}.
Section~\ref{sec:4-uniform} establishes the monotone stepping-up lemma,
deduces Theorem~\ref{thm:main-4}, and derives the higher-uniform
consequence stated in Corollary~\ref{cor:main-k}.
Finally, Section~\ref{sec:conclusion} discusses a natural strengthening
of the \(4\)-uniform result and its connection with
Conjecture~\ref{conj:MS}.

\section{Preliminaries}
\label{sec:preliminaries}

We use the following notation throughout the paper. For a hypergraph \(H\), write \(v(H)=|V(H)|\) and
\(e(H)=|E(H)|\).  We often omit braces and commas
in small sets; for instance, \(xy\) means \(\{x,y\}\), and \(xyz\) means
\(\{x,y,z\}\). For a positive integer
\(m\), write
$
        [m]=\{1,\ldots,m\}.
$
For a set \(X\) and an integer \(r\ge0\), let \(\binom{X}{r}\) denote the
family of all \(r\)-element subsets of \(X\).  We use standard asymptotic notation, and all implicit
constants may depend on the fixed parameters of the statement or proof under
consideration.

If \(H\) is a hypergraph and \(W\subseteq V(H)\), then \(H[W]\) denotes the
subhypergraph of \(H\) induced by \(W\).  For a \(k\)-graph \(H\) and an
integer \(s\ge k\), define
\[
        \alpha_s(H):=
        \max\{|W|: W\subseteq V(H),\ H[W]\text{ contains no copy of }
        K_s^{(k)}\}.
\]
Equivalently, \(\alpha_s(H)\) is the largest size of a \(K_s^{(k)}\)-free
induced subhypergraph of \(H\). Then
\[
        f_{s,t}^{(k)}(n)
        =
        \min\bigl\{
        \alpha_s(H):\ |V(H)|=n,\ K_t^{(k)}\nsubseteq H
        \bigr\}.
\]

For an \(h\)-graph \(H\) and a set \(S\subseteq V(H)\), let \(d_H(S)\)
denote the common degree of \(S\), namely
\[
        d_{H}(S)
        :=
        |\{E\in E(H): S\subseteq E\}|.
\]
For \(1\le \ell\le h\), let \(\Delta_\ell(H)\) denote the maximum \(\ell\)-codegree, that is,
\[
        \Delta_\ell(H)
        :=
        \max\left\{
        d_{H}(S):\ S\in \binom{V(H)}{\ell}
        \right\}.
\]

We use three standard tools.  The first is the following convenient
transversal-clique consequence of Janson's inequality; see, for example,
\cite[Chapter~2]{JLR} or \cite[Chapter~8]{AS}.  It gives an exponentially
small upper bound for the probability that a random graph contains no
copy of \(K_\ell\) with one vertex in each of the prescribed vertex classes.
\begin{lemma}\label{lem:janson}
For every fixed integer \(\ell\ge 2\), there exists a constant \(c_\ell>0\) with the
following property. Let \(U_1,\ldots,U_\ell\) be pairwise disjoint sets of size
\(u\), and let \(R\) be a random graph on \(U_1\cup\cdots\cup U_\ell\) in which
each edge is present independently with probability \(p\). Let \(X\) be the
number of ordered tuples \((x_1,\ldots,x_\ell)\) such that \(x_i\in U_i\) for every
\(i\) and \(x_1,\ldots,x_\ell\) form a complete graph \(K_\ell\). Then
\[
        \Pr(X=0)
        \le
        \exp\left(
        -c_\ell\min_{2\le j\le \ell}
        u^j p^{\binom{j}{2}}
        \right).
\]
\end{lemma}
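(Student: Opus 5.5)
This is a direct application of Janson's inequality to the count \(X\) of transversal cliques. Write \(X=\sum_{\mathbf{x}} I_{\mathbf{x}}\), summing over ordered transversals \(\mathbf{x}=(x_1,\ldots,x_\ell)\in U_1\times\cdots\times U_\ell\). Here \(I_{\mathbf{x}}\) is the indicator that all \(\binom{\ell}{2}\) pairs \(x_ix_j\) are edges of \(R\). Each \(I_{\mathbf{x}}\) is the indicator that a fixed edge set is contained in the random edge set, so Janson's inequality applies. In the form of \cite[Chapter~2]{JLR} it gives
\[
        \Pr(X=0)\le \exp\left(-\frac{\mu^2}{\mu+\bar\Delta}\right),
\]
where \(\mu=\mathbb{E}X=u^\ell p^{\binom{\ell}{2}}\). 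The quantity \(\bar\Delta\) is the sum of \(\mathbb{E}[I_{\mathbf{x}}I_{\mathbf{y}}]\) over ordered pairs \((\mathbf{x},\mathbf{y})\) of transversals that share at least one edge (the diagonal \(\mathbf{x}=\mathbf{y}\) may be included). Since \(\mu^2/(\mu+\bar\Delta)\ge \tfrac12\min\{\mu,\mu^2/\bar\Delta\}\), it suffices to show that both \(\mu\) and \(\mu^2/\bar\Delta\) are at least a constant multiple of
\[
        M:=\min_{2\le j\le\ell}u^jp^{\binom{j}{2}} .
\]

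The bound \(\mu\ge M\) is immediate, because \(\mu\) is the \(j=\ell\) term of the minimum.

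For \(\bar\Delta\), classify pairs \((\mathbf{x},\mathbf{y})\) by the set \(J\subseteq[\ell]\) of coordinates where they agree. Two transversals share an edge exactly when \(|J|=j\ge2\). The key observation is that a transversal meets each class \(U_i\) in exactly one vertex. Hence the common vertices occupy the same coordinates in \(\mathbf{x}\) and \(\mathbf{y}\), and the common edges are exactly the \(\binom{j}{2}\) pairs inside \(J\). For a fixed \(J\) of size \(j\), count as follows:
\begin{itemize}
\item there are at most \(u^{\ell}\cdot u^{\ell-j}\) such pairs;
\item each pair contributes \(p^{2\binom{\ell}{2}-\binom{j}{2}}\).
\end{itemize}
Summing over the at most \(2^\ell\) choices of \(J\) gives
\[
        \bar\Delta\le 2^\ell\max_{2\le j\le \ell}u^{2\ell-j}p^{2\binom{\ell}{2}-\binom{j}{2}}
        =2^\ell\,\mu^2\max_{2\le j\le\ell}\frac{1}{u^jp^{\binom{j}{2}}}
        =\frac{2^\ell\mu^2}{M}.
\]
Therefore \(\mu^2/\bar\Delta\ge 2^{-\ell}M\). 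Combining the two bounds, \(\Pr(X=0)\le\exp(-c_\ell M)\) with \(c_\ell=2^{-\ell-1}\).

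The only step that needs care is this overlap bookkeeping: identifying that the shared edges are exactly the pairs inside the agreement set \(J\). It relies on the classes \(U_1,\ldots,U_\ell\) being disjoint and indexed by position, so there are no cross-coordinate coincidences to worry about. Everything else is a direct substitution into Janson's inequality.
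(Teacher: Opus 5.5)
Your proposal is correct and follows essentially the same route as the paper: Janson's inequality with $\mu=u^\ell p^{\binom{\ell}{2}}$, and the overlap term bounded by grouping pairs of transversals according to the number $j$ of shared coordinates, so that $\mu^2/\bar\Delta$ is at least a constant times $\min_j u^jp^{\binom{j}{2}}$. The differences are only cosmetic: by using the form $\exp(-\mu^2/(\mu+\bar\Delta))$ and letting the diagonal $j=\ell$ into the overlap sum, you cover the cases $\ell=2$ and $\Delta=0$ that the paper treats separately, and you make explicit, via the disjointness of the $U_i$, why the shared edges are exactly the $\binom{j}{2}$ pairs inside the agreement set.
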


\begin{proof}
For \(\ell=2\), we have
$
        \Pr(X=0)=(1-p)^{u^2}\le \exp(-u^2p).
$
Suppose \(\ell\ge3\). Let \(\mathcal A\) be the family of all ordered
transversal \(\ell\)-tuples. For \(A\in\mathcal A\), let \(E_A\) be the event
that \(A\) spans a \(K_\ell\). Then
\[
        \mu:=\sum_{A\in\mathcal A}\Pr(E_A)=u^\ell p^{\binom{\ell}{2}}.
\]
Two distinct events can be dependent only when the corresponding tuples share at
least two vertices. If they share exactly \(j\) vertices, then the number of
ordered pairs of such tuples is at most \(C_\ell u^{2\ell-j}\), and the probability
that both copies occur is \(p^{2\binom{\ell}{2}-\binom{j}{2}}\). Thus the usual
dependency term satisfies
\[
\Delta:=\sum_{A\sim B}\Pr(E_A\cap E_B)
\le
\sum_{j=2}^{\ell-1}
\binom{\ell}{j} u^{2\ell-j}\, p^{2\binom{\ell}{2}-\binom{j}{2}}
\le
C_\ell \sum_{j=2}^{\ell-1}
u^{2\ell-j}p^{2\binom{\ell}{2}-\binom{j}{2}},
\]
where \(C_\ell\) is a constant depending only on \(\ell\).
If \(\Delta=0\), then the events are mutually independent and
\(\Pr(X=0)\le \exp(-\mu)\), which gives the desired conclusion.
We may therefore assume that \(\Delta>0\).  Janson's inequality then gives
\[
        \Pr(X=0)
        \le
        \exp\left(
        -c'_\ell\min\left\{\mu,\frac{\mu^2}{\Delta}\right\}
        \right).
\]
Since \(\ell\) is fixed, we obtain
\[
        \frac{\mu^2}{\Delta}
        \ge
        c''_\ell\min_{2\le j\le \ell-1}u^j p^{\binom{j}{2}}.
\]
Combining these two estimates proves the
lemma, after adjusting the constant \(c_\ell\).
\end{proof}

The second tool is the standard supersaturation theorem for complete graphs.
It asserts that a graph whose density exceeds the Tur\'an threshold by a
fixed amount contains a positive proportion of all possible copies of the
corresponding clique.

\begin{lemma}[Supersaturation for complete graphs~\cite{ES}]\label{lem:supersaturation}
Let \(r\ge 3\) and \(\varepsilon>0\) be fixed. There are constants
\(\kappa=\kappa(r,\varepsilon)>0\) and \(t_0=t_0(r,\varepsilon)\) such that
every graph \(B\) on \(t\ge t_0\) vertices with
\[
        e(B)
        \ge
        \left(1-\frac1{r-1}+\varepsilon\right)\binom t2
\]
contains at least \(\kappa t^r\) distinct \(r\)-vertex sets spanning copies of \(K_r\).
\end{lemma}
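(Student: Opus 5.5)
The statement is the Erd\H{o}s--Stone supersaturation lemma, and I will prove it by the standard averaging argument over small subsets, with Tur\'an's theorem as the only input. Fix \(r\ge3\) and \(\varepsilon>0\), and choose an integer \(m=m(r,\varepsilon)\) large enough that every graph on \(m\) vertices with more than \(\left(1-\frac1{r-1}\right)\frac{m^2}{2}\) edges contains a \(K_r\). Tur\'an's theorem guarantees such a graph contains a \(K_r\), and edge density at least \(1-\frac1{r-1}+\frac\varepsilon2\) on \(m\) vertices clears this threshold once \(\frac{\varepsilon}{2}\binom m2\ge \frac{m}{2}\cdot\) (a constant), i.e.\ for \(m\gtrsim 1/\varepsilon\).

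Next, let \(B\) have \(t\ge t_0\) vertices and \(e(B)\ge(1-\frac1{r-1}+\varepsilon)\binom t2\). Average over all \(m\)-subsets \(M\subseteq V(B)\). Each edge lies in a \(\binom{t-2}{m-2}/\binom tm=\binom m2/\binom t2\) fraction of them, so the average of \(e(B[M])/\binom m2\) equals \(e(B)/\binom t2\ge 1-\frac1{r-1}+\varepsilon\). Since \(e(B[M])/\binom m2\le1\), a Markov-type argument shows that at least an \(\varepsilon/2\) fraction of the \(m\)-sets satisfy \(e(B[M])\ge(1-\frac1{r-1}+\frac\varepsilon2)\binom m2\). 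Concretely, if a fraction \(q\) are good, then the average is at most \(q+(1-q)(1-\frac1{r-1}+\frac\varepsilon2)\), which forces \(q\ge\varepsilon/2\).

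By the choice of \(m\), each good \(M\) spans at least one \(r\)-set inducing \(K_r\). A fixed \(r\)-set lies in exactly \(\binom{t-r}{m-r}\) of the \(m\)-sets. Double counting therefore gives at least
\[
\frac{(\varepsilon/2)\binom tm}{\binom{t-r}{m-r}}=\frac{\varepsilon}{2}\binom tr\Big/\binom mr
\]
distinct \(K_r\)-sets. For \(t\ge t_0\) this is at least \(\kappa t^r\), where \(\kappa=\varepsilon/(2\, r!\,\binom mr)\) up to an absolute factor. Here \(t_0\) need only ensure \(t\ge m\) and \(\binom tr\ge t^r/(2r!)\).

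The only delicate step is the first one: checking that the density condition with slack \(\varepsilon/2\) on \(m\) vertices, phrased via \(\binom m2\), actually exceeds the Tur\'an number \(t_{r-1}(m)\le(1-\frac1{r-1})\frac{m^2}2\). This holds once \(\frac\varepsilon2\binom m2> (1-\frac1{r-1})\frac m2\), i.e.\ for \(m> 2/\varepsilon+1\). Everything else is routine counting.
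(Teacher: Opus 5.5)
Your argument is correct. The condition $m>1+2/\varepsilon$ pushes $(1-\frac1{r-1}+\frac\varepsilon2)\binom m2$ above the Tur\'an bound, averaging gives a good fraction $q\ge\varepsilon/2$, and the identity $\binom tm/\binom{t-r}{m-r}=\binom tr/\binom mr$ then yields $\kappa t^r$ distinct $K_r$-sets once $t_0\ge m$. The paper does not prove this lemma and only cites Erd\H{o}s--Simonovits, and your averaging over $m$-subsets combined with Tur\'an's theorem is the standard proof of that cited result.
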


The third is the hypergraph container method.
Hypergraph containers, developed independently by Balogh, Morris and
Samotij~\cite{BMS} and by Saxton and Thomason~\cite{ST}, have become a
powerful tool for counting independent sets in uniform hypergraphs and
describing their typical structure. They have found many applications in
extremal and Ramsey theory, as well as in additive combinatorics.  We use the
following container lemma; the stated form follows directly, for example, from \cite[Corollary~3.6]{ST}.

\begin{lemma}[Hypergraph containers~\cite{BMS,ST}]
\label{lem:containers}
For every \(h\ge2\), there are constants \(c_h,C_h,d_h,\tau_h>0\) with the
following property.  Let \(H\) be an \(h\)-graph with average degree
\(d\ge d_h\), and let \(0<\tau\le\tau_h\).  If
\[
        \Delta_\ell(H)
        \le
        c_h d\tau^{\ell-1}
        \qquad (2\le \ell\le h),
\]
then there is a family \(\mathcal C\) of subsets of \(V(H)\) such that

\medskip

(i) every independent set in \(H\) is contained in some
\(C\in\mathcal C\);

\smallskip

(ii) \(e(H[C])\le e(H)/2\) for every \(C\in\mathcal C\);

\smallskip

(iii) \(\log|\mathcal C|
\le C_h|V(H)|\tau\log(1/\tau)\).
\end{lemma}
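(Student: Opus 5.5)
The plan is to read the statement off the Saxton--Thomason container theorem \cite[Corollary~3.6]{ST}, which is phrased in terms of their \emph{codegree function}. Set \(N=|V(H)|\) and \(d=h\,e(H)/N\). For a vertex \(v\) and \(2\le j\le h\), let \(d^{(j)}(v)\) be the largest common degree \(d_H(S)\) over \(j\)-sets \(S\ni v\). Put
\[
\delta_j=\frac{\sum_v d^{(j)}(v)}{Nd},
\qquad
\delta(H,\tau)=2^{\binom h2-1}\sum_{j=2}^h 2^{-\binom{j-1}{2}}\frac{\delta_j}{\tau^{j-1}}.
\]
Corollary~3.6 of \cite{ST} says roughly the following. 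Take a fixed \(\epsilon\in(0,1)\) and \(\tau\) below an absolute threshold (something like \(1/(144\,h!^2h)\)). If
\[
\delta(H,\tau)\le \frac{\epsilon}{12\,h!},
\]
then there is a family \(\mathcal C\) of subsets of \(V(H)\) such that every independent set lies in some \(C\in\mathcal C\), \(e(H[C])\le\epsilon\,e(H)\) for each \(C\), and \(\log|\mathcal C|\le c\log(1/\epsilon)\,N\tau\log(1/\tau)\), where \(c=c(h)\). So the whole proof consists of choosing constants so that these hypotheses hold with \(\epsilon=1/2\).

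First I would bound the codegree quantities by the maximum codegrees. Since \(d^{(j)}(v)\le\Delta_j(H)\) for every \(v\), we get \(\delta_j\le\Delta_j(H)/d\). The hypothesis \(\Delta_j(H)\le c_hd\tau^{j-1}\) then gives \(\delta_j/\tau^{j-1}\le c_h\). Substituting,
\[
\delta(H,\tau)\le 2^{\binom h2-1}\,(h-1)\,c_h .
\]
Next I would choose \(c_h\) small enough that this is at most \(1/(24\,h!)\), which is the required bound for \(\epsilon=1/2\). I would take \(\tau_h\) to be the threshold on \(\tau\) in \cite{ST}. The constant \(d_h\) only needs to absorb any lower bound on \(d\) or \(N\) assumed there; I would check whether one exists, and if \(d\) were small the codegree hypothesis with \(\ell=h\) already forces the trivial situation. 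Corollary~3.6 then delivers properties (i) and (ii) directly, and (iii) holds with \(C_h=c\log 2\).

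The only real obstacle is bookkeeping: matching normalisations between the two formulations. Specifically, I would check that the average degree in \cite{ST} is \(d=h\,e(H)/N\), and that their \(\delta_j\) uses the maximum over \(j\)-sets containing \(v\), so the pointwise bound by \(\Delta_j\) is valid. Any discrepancy there changes only constant factors, which are all absorbed into \(c_h\), \(C_h\) and \(\tau_h\).
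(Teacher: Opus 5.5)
Your proposal is essentially the paper's proof: bound the Saxton--Thomason codegree function termwise by \(\max_\ell \Delta_\ell(H)/(d\tau^{\ell-1})\le c_h\), choose \(c_h\) so that \(\delta(H,\tau)\le \epsilon/(12h!)\), and read (i)--(iii) off Corollary~3.6 of Saxton--Thomason.

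There is one slip, easily repaired: the choice \(\epsilon=1/2\). As the paper states that corollary, it requires \(0<\epsilon,\tau<1/2\) strictly, not \(\epsilon\in(0,1)\), so \(\epsilon=1/2\) lies on the excluded boundary. The paper instead takes \(\epsilon=1/4\), which in your notation means choosing \(c_h\) with \(2^{\binom h2-1}(h-1)c_h\le 1/(48h!)\). This gives \(e(H[C])\le e(H)/4\le e(H)/2\) and \(C_h=c\log 4\).

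Your remark that the \(\ell=h\) hypothesis forces \(d\ge 1/(c_h\tau^{h-1})\) whenever \(H\) has an edge is correct. It makes the choice of \(d_h\) immaterial; the paper simply sets \(d_h=1\).
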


\begin{proof}
For \(v\in V(H)\) and \(2\le\ell\le h\), let
\(\Delta_\ell(H;v)\) denote the maximum degree of an \(\ell\)-set
containing \(v\).  Thus
\[
        \Delta_\ell(H;v)\le\Delta_\ell(H)
        \qquad (v\in V(H)).
\]
In the present notation, the co-degree function of Saxton and
Thomason~\cite{ST} is
\[
        \delta(H,\tau)
        :=
        2^{\binom h2-1}
        \sum_{\ell=2}^h
        2^{-\binom{\ell-1}{2}}
        \frac{\sum_{v\in V(H)}\Delta_\ell(H;v)}
             {|V(H)|d\tau^{\ell-1}}.
\]

We use the following consequence of
\cite[Corollary~3.6]{ST}, retaining only the covering, edge-reduction,
and counting conclusions needed here. If \(0<\varepsilon,\tau<1/2\) and
\(\delta(H,\tau)\le\varepsilon/(12h!)\), then there is a family
\(\mathcal C\) of subsets of \(V(H)\) such that every independent set
of \(H\) is contained in some \(C\in\mathcal C\),
\(e(H[C])\le\varepsilon e(H)\) for every \(C\in\mathcal C\), and
\[
        \log|\mathcal C|
        \le
        A_h\log(1/\varepsilon)\,
        |V(H)|\tau\log(1/\tau),
\]
where \(A_h>0\) depends only on \(h\).

By the definition above and the bound
\(\Delta_\ell(H;v)\le\Delta_\ell(H)\), we obtain
\[
        \delta(H,\tau)
        \le
        2^{\binom h2-1}
        \sum_{\ell=2}^h
        2^{-\binom{\ell-1}{2}}
        \frac{\Delta_\ell(H)}
             {d\tau^{\ell-1}}  
        \le
        B_h
        \max_{2\le\ell\le h}
        \frac{\Delta_\ell(H)}
             {d\tau^{\ell-1}},
\]
where
$
        B_h:=
        2^{\binom h2-1}
        \sum_{\ell=2}^h
        2^{-\binom{\ell-1}{2}}.
$
The assumed maximum co-degree bounds therefore imply
\[
        \delta(H,\tau)
        \le
        B_hc_h.
\]

Take \(d_h:=1\), choose \(\tau_h>0\) sufficiently small that
Corollary~3.6 of~\cite{ST} applies whenever \(0<\tau\le\tau_h\), set
\(\varepsilon:=1/4\), and choose \(c_h>0\)
sufficiently small that \(B_hc_h\le1/(48h!)\).  Then
\[
        \delta(H,\tau)
        \le
        \frac{1}{48h!}
        =
        \frac{\varepsilon}{12h!},
\]
hence \cite[Corollary~3.6]{ST} applies.  It gives a family
\(\mathcal C\) covering every independent set of \(H\), with
\(e(H[C])\le e(H)/4\le e(H)/2\) for every \(C\in\mathcal C\), and
\[
        \log|\mathcal C|
        \le
        A_h\log4\,
        |V(H)|\tau\log(1/\tau).
\]
Taking \(C_h:=A_h\log4\) proves the lemma.
\end{proof}

\noindent
{\em Overview of the proofs of the main results.}
The proof has two main components.  For the \(3\)-uniform result, following
pair-coloring ideas of Conlon, Fox and Sudakov~\cite{CFS10} and Dudek and
Mubayi~\cite{DM}, we construct an auxiliary \(K_s\)-free graph \(F\) with a
robust transversal clique property.  We use \(F\) as a palette for randomly
coloring pairs and define a \(3\)-graph \(G_\chi\), which is
\(K_{s+1}^{(3)}\)-free by construction.

For a fixed \(t\)-vertex set, the local colorings that induce no
\(K_s^{(3)}\) are encoded as independent sets in an auxiliary hypergraph
\(\Gamma\).  A recursive application of the hypergraph container theorem,
combined with supersaturation and the robust property of \(F\), forces every
terminal container to have many reduced color lists.  Consequently, the
number of bad local colorings is at most \(q^{(1-\eta)m}\), where
\(m=\binom{t}{2}\) and \(\eta>0\).

The gain over the argument of Dudek and Mubayi occurs at this counting step.
Their fixed-palette approach passes to a packing of independent local clique
events, leading naturally to a logarithmic bound.  Our growing palette and
container argument control the full family of overlapping local
configurations.  Since the palette size grows polynomially with \(t\), the
saving \(q^{-\eta m}\) provides an additional logarithmic factor in the
exponent, which yields the factor of \(\log\log n\) improvement in
Theorem~\ref{thm:main-3}.

We next pass from \(3\)-graphs to \(4\)-graphs through a monotone
stepping-up construction.  Its edge definition ensures that a
\(K_{s+1}^{(4)}\) would induce a \(K_s^{(3)}\) in the base graph and is
therefore forbidden. Moreover, within any \(K_s^{(4)}\)-free vertex set, a sufficiently long increasing or decreasing \(\delta\)-chain would, by a standard Erdős--Szekeres type argument, determine a \(K_{s-1}^{(3)}\) in the base graph, producing a contradiction.
A Pascal-type argument converts the resulting bounds on the two chain
lengths into a bound for the whole set.  Combining this with
Theorem~\ref{thm:main-3} proves Theorem~\ref{thm:main-4}.

\section{Upper bound for \texorpdfstring{\(3\)-graphs}{3-graphs}}
\label{sec:3-graphs}

In this section we prove Theorem~\ref{thm:main-3}.  We first construct the
palette graph used in the random pair-coloring construction, and then carry
out the container count for the bad local colorings.

\subsection{The auxiliary graph \texorpdfstring{\(F\)}{F}}

The following lemma guarantees the existence of a \(K_s\)-free graph such that 
every collection of at most \(s-1\) sufficiently large vertex subsets
contains a clique with one vertex chosen from each subset.

\begin{lemma}\label{lem:robust-auxiliary}
For every fixed integer $s\ge 3$, there is a constant
$\lambda=\lambda(s)>0$ such that the following holds for every
sufficiently large integer $Q$.  There exists a $K_s$-free graph $F$ with $\frac{3Q}{4}\le |V(F)|\le Q$ such that, for every $2\le \ell\le s-1$ and every $U_1,\ldots,U_\ell\subseteq V(F)$ satisfying
\[
  |U_i|\ge |V(F)|^{1-\lambda}
  \qquad (1\le i\le \ell),
\]
there are pairwise distinct vertices $u_i\in U_i$ such that
$u_1,\ldots,u_\ell$ span a $K_\ell$ in $F$.
\end{lemma}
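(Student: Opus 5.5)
We will use the standard alteration method on the binomial random graph \(R=G(Q,p)\) with \(p=Q^{-\beta}\), where \(\beta>0\) is chosen slightly above the \(K_s\)-density threshold.

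\textbf{Choice of \(p\).} The expected number of copies of \(K_s\) in \(R\) is at most \(Q^sp^{\binom s2}=Q^{s-\beta\binom s2}\). We take
\[
\beta=\frac{2}{s-1}-\gamma
\]
for a tiny \(\gamma>0\). Then \(s-\beta\binom s2=\gamma\binom s2<1\), so the expected number of \(K_s\) copies is at most \(Q^{\gamma\binom s2}\le Q/8\).

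\textbf{Alteration.} By Markov's inequality, with probability at least \(1/2\) the graph \(R\) contains at most \(Q/4\) copies of \(K_s\). We delete one vertex from each copy and obtain a \(K_s\)-free graph \(F\) with \(3Q/4\le|V(F)|\le Q\).

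The key is that the robust property must survive this deletion. We therefore prove it for \(R\) itself, uniformly over all choices of sets \(U_i\subseteq[Q]\) with \(|U_i|\ge u:=(3Q/4)^{1-\lambda}/2\). A set in \(V(F)\) of size at least \(|V(F)|^{1-\lambda}\) is such a set, and a transversal clique found in \(R\) on vertices lying in \(V(F)\) is a clique of \(F\). So it suffices to establish the property for \(R\) with the sets restricted to \(V(F)\), and these are arbitrary subsets of \([Q]\) of size at least \(u\).

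\textbf{Reduction to disjoint sets.} To handle overlapping sets \(U_i\), we first pass to pairwise disjoint subsets \(U_i'\subseteq U_i\), each of size \(u/\ell\). This is done greedily: when the sets are large, randomly splitting each element's membership among the sets containing it gives each \(U_i'\) at least \(|U_i|/\ell\) elements. Distinct vertices then come for free.

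\textbf{Union bound with Janson.} Lemma~\ref{lem:janson} gives, for fixed disjoint sets of size \(u'=u/\ell\),
\[
\Pr(\text{no transversal }K_\ell)\le\exp\Bigl(-c\min_{2\le j\le\ell}u'^j p^{\binom j2}\Bigr).
\]
For \(j\le \ell\le s-1\) we have
\[
u'^jp^{\binom j2}=Q^{\,j(1-\lambda)-\beta j(j-1)/2-o(1)}.
\]
The exponent equals \(j\bigl(1-\lambda-\beta(j-1)/2\bigr)\). Using \(\beta(j-1)/2\le \beta(s-2)/2=\frac{s-2}{s-1}-\gamma\frac{s-2}{2}\), the exponent is at least
\[
j\Bigl(\frac{1}{s-1}-\lambda\Bigr)\ge 2\Bigl(\frac{1}{s-1}-\lambda\Bigr).
\]
Hence each failure probability is at most \(\exp(-Q^{2/(s-1)-2\lambda-o(1)})\).

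The number of choices of the tuple \((U_1',\dots,U_\ell')\) is at most \(\binom{Q}{u'}^{\ell}\le\exp(\ell u'\log Q)=\exp(Q^{1-\lambda+o(1)})\). The union bound therefore succeeds provided
\[
\frac{2}{s-1}-2\lambda>1-\lambda,
\]
which holds only when \(s\le 2\). This is the main obstacle: the naive union bound over all large sets fails for \(s\ge3\).

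\textbf{Fixing the obstacle.} We do not need to union bound over all sets. It suffices to find an edge or clique inside the sets, and we can exploit monotonicity by taking the \(U_i'\) of size exactly \(u'\), while allowing \(\lambda\) close to \(1\)? That is not available, since \(\lambda\) is small and the sets are huge (size \(Q^{1-\lambda}\)). The deficit is in comparing \(u'^2p\) against \(u'\log Q\).

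\begin{itemize}
\item For \(j=2\): \(u'^2p=Q^{2-2\lambda-\beta}\), and \(2-2\lambda-\beta>1-\lambda\) requires \(\beta<1-\lambda\). This holds since \(\beta\approx 2/(s-1)\le1\), with \(\lambda\) tiny and \(\gamma\) adjusted (for \(s=3\), \(\beta=1-\gamma\), so we need \(\lambda<\gamma\)).
\item For general \(j\): the requirement is \(j(1-\lambda)-\beta\binom j2>1-\lambda\). At \(j=\ell\le s-1\) this reads \((j-1)(1-\lambda)>\beta j(j-1)/2\), that is, \(\beta<2(1-\lambda)/j\). Since \(j\le s-1\), this is implied by \(\beta<2(1-\lambda)/(s-1)\).
\end{itemize}

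So the earlier computation was too lossy. Setting \(\beta=\frac{2}{s-1}-\gamma\) with \(\lambda<\gamma(s-1)/4\), every \(j\le s-1\) gives
\[
u'^jp^{\binom j2}\ge Q^{(1-\lambda)+\epsilon}
\]
for some \(\epsilon>0\). The union bound then closes with room to spare.

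Finally, the number of \(K_s\) copies is \(Q^{\gamma\binom s2}\), so we choose \(\gamma<1/\binom s2\) and then \(\lambda\) small in terms of \(\gamma\). The only delicate points are this bookkeeping of exponents and the disjointification step.
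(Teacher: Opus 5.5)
Your proposal is correct and follows essentially the paper's route: a binomial random graph, deleting one vertex from each of the at most \(Q/4\) copies of \(K_s\), Janson's inequality plus a union bound over ordered tuples of disjoint sets, and greedy disjointification. The only difference is the sparser \(p=Q^{-2/(s-1)+\gamma}\), which makes the \(K_s\)-count sublinear without tuning a constant but ties \(\lambda\) to \(\gamma\); the paper instead uses \(p=cQ^{-2/s}\), test sets of fixed size \(\lceil Q^{1-1/(2s)}\rceil\) and \(\lambda=1/(4s)\). \textbf{When writing it up}, discard the first (too lossy) exponent estimate and the random-splitting remark, and keep only the per-\(j\) bound \((j-1)\bigl(1-\lambda-\beta j/2\bigr)\ge\gamma(s-1)/2-\lambda>0\) and the plain greedy choice of disjoint \(U_i'\subseteq U_i\) of size \(\lfloor u/\ell\rfloor\).
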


\begin{proof}
We choose the random graph sparse enough to contain only linearly many
copies of \(K_s\), but dense enough that every collection of at most
\(s-1\) sufficiently large vertex sets contains a transversal clique. For this purpose set
\[
  \lambda_0:=\frac1{2s},
  \qquad
  u:=\left\lceil Q^{1-\lambda_0}\right\rceil,
  \qquad
  p:=cQ^{-2/s},
\]
where \(c=c(s)>0\) will be chosen sufficiently small.  Let
\(G\sim G(Q,p)\) be an Erd\H{o}s--R\'enyi random graph on a fixed vertex
set of size \(Q\), where each possible edge is present with probability
\(p\), independently of all other edges.

Fix \(2\le \ell\le s-1\) and pairwise disjoint \(u\)-sets
\(A_1,\ldots,A_\ell\subseteq V(G)\).
By Lemma~\ref{lem:janson}, the probability that there are no vertices
\(a_i\in A_i\), \(1\le i\le \ell\), which form a \(K_\ell\), is at most 
$\exp \big(
    -c_\ell\min_{2\le r\le \ell}
    u^r p^{\binom r2}
  \big)$. For $2\le r\le s-1$,
\begin{align*}
  u^r p^{\binom r2}
  =
  Q^{r(1-1/(2s))-(2/s)\binom r2+o(1)}
  =
  Q^{r(2s-2r+1)/(2s)+o(1)}.
\end{align*}

The exponent is a concave function of $r$, and hence its minimum on
$2\le r\le s-1$ is attained at an endpoint. Note that
$\frac{2s-3}{s}\ge\frac{3(s-1)}{2s}$ ($s\ge3$),
thus we have
\[
  \min_{2\le r\le s-1}
  u^r p^{\binom r2}
  \ge
  Q^{3(s-1)/(2s)+o(1)}.
\]
The number of ordered choices of $A_1,\ldots,A_\ell$ is at most
\[
  \binom{Q}{u}^\ell
  \le
  \exp\bigl(O_s(u\log Q)\bigr)
  =
  \exp\!\left(Q^{1-1/(2s)+o(1)}\right).
\]
Since $\frac{3(s-1)}{2s}>1-\frac1{2s}$, we have, for each fixed $\ell$,
$\binom{Q}{u}^\ell
  \exp\!\left(-c_\ell Q^{3(s-1)/(2s)+o(1)}\right)=o(1)$. Taking a union bound over all ordered $\ell$-tuples
\((A_1,\ldots,A_\ell)\) of pairwise disjoint \(u\)-subsets of \(V(G)\),
and then over \(2\le \ell\le s-1\), we obtain that the event
\[
\mathcal E_1=
\left\{
\begin{array}{l}
\text{for every }2\le \ell\le s-1\text{ and every ordered tuple of pairwise}\\
\text{disjoint }u\text{-sets }A_1,\ldots,A_\ell\subseteq V(G),\text{ there is a}\\
\text{transversal copy of }K_\ell\text{ with one vertex in each }A_i
\end{array}
\right\}
\]
satisfies \(\Pr(\mathcal E_1)=1-o(1)\).

Let $X_s$ be the number of copies of $K_s$ in $G$.  Then
$ \mathbb{E}(X_s)
  =
  \binom Qs p^{\binom s2}
  \le
  C_s c^{\binom s2}Q$. Choose $c$ small enough that $\mathbb{E}(X_s)\le Q/16$.  Markov's inequality
then gives
$
 \Pr(X_s>Q/4)\le 1/4.
$
Let \(\mathcal E_2:=\{X_s\le Q/4\}\). Then, for all sufficiently large \(Q\),
\[
        \Pr(\mathcal E_1\cap\mathcal E_2)
        \ge 1-o(1)-\frac14>0.
\]
Choose a realization of \(G\) in
\(\mathcal E_1\cap\mathcal E_2\). Thus \(G\) satisfies all the
transversal properties established above and contains at most \(Q/4\)
copies of \(K_s\).
Choose one vertex from each copy of \(K_s\) and delete all chosen vertices.
The resulting induced graph $F$ is $K_s$-free and has order
$q:=|V(F)|\ge 3Q/4$.

Set \(\lambda:=1/(4s)\). Now let
\(U_1,\ldots,U_\ell\subseteq V(F)\) satisfy
$|U_i|\ge q^{1-\lambda}$, $1\le i\le \ell$.
The sets \(U_i\) are not assumed to be disjoint. Since \(q\ge 3Q/4\) and \(\ell\) is fixed,
\[
        \frac{q^{1-\lambda}}{u}=Q^{1/(4s)+o(1)}\to\infty.
\]
In particular, for all sufficiently large \(Q\), each \(U_i\) has size at least \(\ell u\). 
We may therefore choose pairwise disjoint \(u\)-sets \(A_i\subseteq U_i\), \(1\le i\le\ell\), greedily.
These sets are subsets of \(V(F)\), hence also of \(V(G)\). By the transversal
property of \(G\), there are vertices \(a_i\in A_i\), \(1\le i\le \ell\), which
form a \(K_\ell\) in \(G\). Since all these vertices lie in \(V(F)\), they also
form a \(K_\ell\) in the induced graph \(F\).
\end{proof}

\subsection{The random construction}

Fix \(s\ge3\), and set
\[
        h:=\binom{s}{2}-1,
        \qquad
        a:=\frac1{10h}.
\]
Let \(t\) be sufficiently large and set \(Q:=\lfloor t^a\rfloor\).  Apply
Lemma~\ref{lem:robust-auxiliary} with this value of \(Q\), and let \(F\) be
 the resulting \(K_s\)-free graph.  Write \(q:=|V(F)|\).  Then
\[
        q=\Theta(t^a).
\]

We now use \(F\) as a palette for a random pair-coloring.  Motivated by a
construction in~\cite{CFS10}, for an integer \(M\), choose a random map
\[
        \chi:\binom{[M]}2\longrightarrow V(F),
\]
where all values are independent and uniform.  For each outcome of this
random coloring, define the corresponding candidate \(3\)-graph \(G_\chi\)
on \([M]\) by declaring that, for \(x<y<z\),
\[
  xyz\in E(G_\chi)
  \quad\Longleftrightarrow\quad
  \chi(xy)\chi(xz)\in E(F).
\]

\begin{claim}\label{cl:no-large-clique-three}
For every choice of $\chi$, the $3$-graph $G_\chi$ is
$K_{s+1}^{(3)}$-free.
\end{claim}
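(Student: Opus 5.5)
Suppose for contradiction that \(x_1<x_2<\cdots<x_{s+1}\) span a copy of \(K_{s+1}^{(3)}\) in \(G_\chi\). The idea is to extract a copy of \(K_s\) in \(F\) from the colors of the pairs containing the smallest vertex \(x_1\); this is impossible because \(F\) is \(K_s\)-free by Lemma~\ref{lem:robust-auxiliary}.

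For \(2\le i\le s+1\), set \(v_i:=\chi(x_1x_i)\in V(F)\). Take any \(2\le i<j\le s+1\). Then \(x_1<x_i<x_j\), so the triple \(x_1x_ix_j\) is written in increasing order with \(x_1\) as its minimum. Since this triple is an edge of \(G_\chi\), the definition of \(G_\chi\) gives \(\chi(x_1x_i)\chi(x_1x_j)\in E(F)\), that is, \(v_iv_j\in E(F)\).

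In particular \(v_i\neq v_j\), since \(F\) has no loops and \(v_iv_j\) is an edge. Hence \(v_2,\ldots,v_{s+1}\) are \(s\) distinct vertices of \(F\), and they are pairwise adjacent. They therefore span a copy of \(K_s\) in \(F\), contradicting the \(K_s\)-freeness of \(F\). This holds for every choice of \(\chi\), since the argument uses no property of \(\chi\).

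The one point that needs care is the distinctness of the \(v_i\). It follows from the fact that the edges of \(F\) join distinct vertices, so no further argument is required there.
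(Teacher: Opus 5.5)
Your proof is correct and follows essentially the same argument as the paper. Both proofs take the colors \(\chi(x_1x_i)\), \(2\le i\le s+1\), on pairs through the smallest vertex \(x_1\). Each triple \(x_1x_ix_j\) forces these colors to be pairwise adjacent, and looplessness of \(F\) makes them distinct, so they span a \(K_s\) in \(F\), which is a contradiction.
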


\begin{proof}
Suppose that $x_1<x_2<\cdots<x_{s+1}$ span a $K_{s+1}^{(3)}$.  For every
$2\le i<j\le s+1$, the triple $x_1x_ix_j$ is an edge.  Hence
$
  \chi(x_1x_i)\chi(x_1x_j)\in E(F).
$
Since \(F\) is loopless, these colors are pairwise distinct.
It follows that
$
\chi(x_1x_2),\chi(x_1x_3),\ldots,\chi(x_1x_{s+1})
$
span a \(K_s\) in \(F\), a contradiction.
\end{proof}

\subsection{The auxiliary hypergraph \texorpdfstring{\(\Gamma\)}{Gamma}}

We first isolate the local \(t\)-vertex model for \(G_\chi\).  For a local
coloring \(\varphi:\binom{[t]}2\to V(F)\), define a \(3\)-graph \(G_\varphi\)
on \([t]\) by declaring that, for \(x<y<z\),
\[
        xyz\in E(G_\varphi)
        \quad\Longleftrightarrow\quad
        \varphi(xy)\varphi(xz)\in E(F).
\]
If \(W=\{w_1<\cdots<w_t\}\subseteq[M]\) and
\(\varphi(ij)=\chi(w_iw_j)\) for \(1\le i<j\le t\), then
\(G_\varphi\) is naturally isomorphic to \(G_\chi[W]\) via the
order-preserving map \(i\mapsto w_i\).

We now define an \(h\)-graph \(\Gamma=\Gamma_{t,F}\).  Its vertex set is
\(V(\Gamma):=\binom{[t]}2\times V(F)\).  A vertex of the form \((e,c)\),
where \(e\in\binom{[t]}2\) and \(c\in V(F)\), corresponds to assigning color
\(c\) to the pair \(e\).

Let \(v_1<\cdots<v_s\) be vertices of \([t]\).  A choice of colors
\(c_{ij}\in V(F)\), where \(1\le i<j\le s\) and \(i\le s-2\), is called
admissible if, for every \(1\le i\le s-2\), the colors
\(c_{i,i+1},c_{i,i+2},\ldots,c_{i,s}\) form a \(K_{s-i}\) in \(F\).  Every
admissible choice gives the edge
\[
        \{(v_iv_j,c_{ij}):1\le i<j\le s,\ i\le s-2\}
\]
of \(\Gamma\).  This edge has
\((s-1)+(s-2)+\cdots+2=\binom s2-1=h\) vertices, so \(\Gamma\) is an
\(h\)-graph.  (Note that the pair \(v_{s-1}v_s\) does not appear: for every
\(1\le i\le s-2\), the triple \(v_iv_{s-1}v_s\) is tested using only the two
colors assigned to \(v_iv_{s-1}\) and \(v_iv_s\).)

For a local coloring \(\varphi\), let
\[
  I_\varphi:=\left\{(xy,\varphi(xy)):xy\in\binom{[t]}2\right\}.
\]

The next lemma identifies the independence of \(I_\varphi\) in \(\Gamma\) with
the \(K_s^{(3)}\)-freeness of \(G_\varphi\).
\begin{lemma}\label{lem:pattern-equivalence}
The set \(I_\varphi\) is independent in \(\Gamma\) if and only if
\(G_\varphi\) contains no copy of \(K_s^{(3)}\).
\end{lemma}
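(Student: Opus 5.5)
We prove both directions by unwinding the definitions. The key observation is that the edges of \(\Gamma\) lying inside \(I_\varphi\) correspond exactly to \(s\)-sets \(v_1<\cdots<v_s\) of \([t]\) that span a \(K_s^{(3)}\) in \(G_\varphi\).

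First we record when \(G_\varphi[\{v_1<\cdots<v_s\}]\) is complete. Every triple \(v_iv_jv_k\) with \(i<j<k\) is an edge precisely when \(\varphi(v_iv_j)\varphi(v_iv_k)\in E(F)\). Here the smallest index \(i\) ranges over \(1,\ldots,s-2\). Hence the \(s\)-set spans \(K_s^{(3)}\) if and only if, for each \(1\le i\le s-2\), every two of the colors \(\varphi(v_iv_{i+1}),\ldots,\varphi(v_iv_s)\) are adjacent in \(F\). Since \(F\) is loopless, adjacency of every pair forces these \(s-i\) colors to be pairwise distinct, so they span a \(K_{s-i}\). This is exactly the admissibility condition with \(c_{ij}:=\varphi(v_iv_j)\).

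For the direction ``\(G_\varphi\) contains \(K_s^{(3)}\) \(\Rightarrow\) \(I_\varphi\) is not independent'', take a copy on \(v_1<\cdots<v_s\). The colors \(c_{ij}=\varphi(v_iv_j)\) are then admissible. The corresponding edge \(\{(v_iv_j,c_{ij})\}\) of \(\Gamma\) consists of vertices \((xy,\varphi(xy))\), so it lies in \(I_\varphi\).

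For the converse, suppose an edge \(E\) of \(\Gamma\) lies in \(I_\varphi\). By definition, \(E\) arises from some \(v_1<\cdots<v_s\) and an admissible choice \((c_{ij})\). Each vertex \((v_iv_j,c_{ij})\in E\) lies in \(I_\varphi\), and \(I_\varphi\) contains exactly one vertex with first coordinate \(v_iv_j\), namely \((v_iv_j,\varphi(v_iv_j))\). Therefore \(c_{ij}=\varphi(v_iv_j)\). Admissibility then says that for each \(i\le s-2\) the colors \(\varphi(v_iv_j)\), \(j>i\), form a clique in \(F\). By the first paragraph, every triple \(v_iv_jv_k\) is an edge of \(G_\varphi\), giving a \(K_s^{(3)}\).

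The only delicate point is the bookkeeping in the converse. An edge of \(\Gamma\) determines its underlying ordered \(s\)-set, since the pairs \(v_iv_j\) with \(i\le s-2\) cover all of \(v_1,\ldots,v_s\). Its color assignment is forced to agree with \(\varphi\) because \(I_\varphi\) is the graph of a function. We must also check that every triple \(v_iv_jv_k\) is tested with \(i\le s-2\), which is immediate since \(i<j<k\le s\). Nothing beyond these definitional checks is needed.
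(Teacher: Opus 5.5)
Your proof is correct and matches the paper's argument: both directions unwind the definitions, taking $c_{ij}:=\varphi(v_iv_j)$ in one direction and using that $I_\varphi$ has exactly one vertex over each pair (forcing $c_{ij}=\varphi(v_iv_j)$) in the other. Your use of the looplessness of $F$ to get pairwise distinct colors, so that pairwise adjacency genuinely yields a $K_{s-i}$, is a detail the paper leaves implicit in this proof.
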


\begin{proof}
First suppose that \(G_\varphi\) contains a copy of \(K_s^{(3)}\) on vertices
\(v_1<\cdots<v_s\).  Then, for every \(1\le i<j<k\le s\), the triple
\(v_iv_jv_k\) is an edge of \(G_\varphi\), so
\(\varphi(v_iv_j)\varphi(v_iv_k)\in E(F)\).  Define
\(c_{ij}:=\varphi(v_iv_j)\) for \(1\le i<j\le s\) and \(i\le s-2\).  Hence,
for every \(1\le i\le s-2\), the colors
\(c_{i,i+1},c_{i,i+2},\ldots,c_{i,s}\) form a complete graph in \(F\), and
the choice is admissible.  Thus
\(\{(v_iv_j,\varphi(v_iv_j)):1\le i<j\le s,\ i\le s-2\}\) is an edge of
\(\Gamma\) contained in \(I_\varphi\), so \(I_\varphi\) is not independent.

Conversely, suppose that \(I_\varphi\) is not independent in \(\Gamma\). Then
\(I_\varphi\) contains an edge. By the definition of
\(\Gamma\), this edge is supported on vertices \(v_1<\cdots<v_s\) and is determined by an admissible choice of colors \(c_{ij}\),
where $1\le i<j\le s, i\le s-2$.  Since this edge is contained in
\(I_\varphi\), we have \(c_{ij}=\varphi(v_iv_j)\) for all such pairs \(i,j\).
By admissibility, for every \(1\le i\le s-2\), the colors
\(c_{i,i+1},c_{i,i+2},\ldots,c_{i,s}\) form a complete graph in \(F\).
Thus \(\varphi(v_iv_j)\varphi(v_iv_k)\in E(F)\) for every
\(1\le i<j<k\le s\).  Hence every triple \(v_iv_jv_k\) is an edge of
\(G_\varphi\), and so \(v_1,\ldots,v_s\) form a copy of \(K_s^{(3)}\) in
\(G_\varphi\).
\end{proof}

\begin{lemma}\label{lem:pattern-codegrees}
There exists a constant \(C_s>0\) such that, for every induced subhypergraph
\(H\subseteq\Gamma\) and every \(2\le \ell\le h\), the maximum \(\ell\)-codegree of \(H\) satisfies 
\[
  \Delta_\ell(H)\le C_s t^{s-3}q^{h-\ell}.
\]
\end{lemma}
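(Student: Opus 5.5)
Since an induced subhypergraph \(H\subseteq\Gamma\) only loses edges, \(d_H(S)\le d_\Gamma(S)\) for every \(S\). So it suffices to prove the bound for \(\Gamma\) itself, by counting the edges of \(\Gamma\) that contain a fixed \(\ell\)-set \(S\) of vertices of \(\Gamma\). The count splits into two factors: the choice of the underlying vertex set \(v_1<\cdots<v_s\) in \([t]\), and the choice of the admissible colours.

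\emph{Step 1: counting vertex sets.} Every edge of \(\Gamma\) containing \(S\) contains \(S\) as a set of distinct pair-colour vertices \((e,c)\) with distinct pairs \(e\). Since \(\ell\ge2\), \(S\) involves at least two distinct pairs \(e,e'\in\binom{[t]}2\), and their union has at least \(3\) vertices of \([t]\). The edge's support \(\{v_1,\ldots,v_s\}\) must contain \(e\cup e'\). Hence there are at most \(\binom{t}{s-3}\le t^{s-3}\) choices for the remaining vertices of the support. Given the support, the order \(v_1<\cdots<v_s\) is determined, and so is the set of \(h\) pairs \(v_iv_j\) (\(i\le s-2\)) that the edge uses. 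If \(S\) uses a pair outside this set, or one pair with two colours, it contributes nothing.

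\emph{Step 2: counting colours.} With the support fixed, the edge is determined by its colours \(c_{ij}\) on the \(h\) relevant pairs. The \(\ell\) pairs appearing in \(S\) have prescribed colours. The other \(h-\ell\) pairs each receive one of \(q\) colours, so there are at most \(q^{h-\ell}\) completions; admissibility can only reduce this number. Multiplying the two counts gives
\[
d_\Gamma(S)\le t^{s-3}q^{h-\ell},
\]
which is the claim with \(C_s=1\) (any constant works).

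There is no real obstacle. The only point needing care is Step 1: we must use \(\ell\ge2\) to guarantee that \(S\) spans at least three vertices of \([t]\), so that the \(t\)-exponent drops from \(s-2\) to \(s-3\). We do not try to exploit the sparsity of \(F\) to improve the \(q\)-exponent, since the stated bound does not require it.
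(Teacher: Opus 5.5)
Your proof is correct and follows essentially the same route as the paper: you pass to $\Gamma$, discard sets using one pair with two colours, use $\ell\ge 2$ to force at least three endpoints so that the support can be chosen in $O(t^{s-3})$ ways, and bound the admissible colour completions by $q^{h-\ell}$. Your explicit constant $C_s=1$ is also valid, since every support contains a fixed $3$-subset of $e\cup e'$, which gives at most $\binom{t-3}{s-3}\le t^{s-3}$ supports.
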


\begin{proof}
It is enough to prove the estimate for \(\Gamma\), since passing to
an induced subhypergraph can only decrease common degrees.
Fix an \(\ell\)-set
$L\subseteq V(\Gamma)$, where $2\le \ell\le h$. Each vertex of \(\Gamma\) has the form \((e,c)\), where
\(e\in\binom{[t]}2\) and \(c\in V(F)\).
First suppose that \(L\) contains two vertices of the form
$ (e,c)$ and $(e,c')$ with \(c\ne c'\).  Then no edge of \(\Gamma\) can contain \(L\),
because in one edge of \(\Gamma\) the same pair \(e\) is assigned at
most one color.  Hence the common degree of \(L\) is zero.

We may therefore assume that the vertices of \(L\) involve \(\ell\) distinct
pairs of \([t]\).  Let \(X\subseteq [t]\) be the set of all endpoints of
these pairs.  Since \(\ell\ge2\) and the pairs are distinct, we have
$ |X|\ge3$. Every edge of \(\Gamma\) containing \(L\) comes from some
\(s\)-set \(S\subseteq[t]\).  This set \(S\) must contain \(X\).  Therefore
the number of possible choices for \(S\) is at most
\[
  \binom{t}{s-|X|}
  \le C_s t^{s-|X|}
  \le C_s t^{s-3},
\]
where the estimate is interpreted as zero if \(|X|>s\).

Now fix such an \(s\)-set \(S\), and write its elements in increasing
order as $v_1<\cdots<v_s$. An edge of \(\Gamma\) supported on \(S\) is determined by assigning
colors to the \(h\) pairs $ v_iv_j, 1\le i<j\le s, i\le s-2$, subject to the admissibility condition.  The set \(L\) already fixes
\(\ell\) of these colors, provided that these pairs occur among the above
\(h\) pairs; if not, then there is no edge supported on \(S\) containing
\(L\).  In the remaining case, there are at most \(h-\ell\) colors still to
choose.  Each of them has at most \(q\) possible values, and the
admissibility condition can only reduce the number of choices.  Hence,
for this fixed \(S\), there are at most
$q^{h-\ell}$ edges of \(\Gamma\) supported on \(S\) and containing \(L\).
Combining the choices of \(S\) with the choices of the remaining colors,
we obtain
\[
  d_{\Gamma}(L)
  \le C_s t^{s-3}q^{h-\ell}.
\]
Taking the maximum over all \(\ell\)-sets \(L\subseteq V(\Gamma)\) gives
$
  \Delta_\ell(\Gamma)\le C_s t^{s-3}q^{h-\ell}.
$
The same bound holds for every induced subhypergraph
\(H\subseteq \Gamma\).  This proves the lemma.
\end{proof}

\subsection{Counting clique-free local colorings}

We now derive the key counting consequence of the robust property of \(F\):
among all \(F\)-valued local colorings, those for which \(G_\varphi\) is
\(K_s^{(3)}\)-free have an exponential saving over the trivial bound.
\begin{lemma}\label{lem:coloring-count}
There is a constant $\eta=\eta(s)>0$ such that, for all sufficiently
large $t$, the number of local colorings
$
  \varphi:\binom{[t]}2\longrightarrow V(F)
$
for which $G_\varphi$ contains no $K_s^{(3)}$ is at most
$ q^{(1-\eta)\binom t2}$.

\end{lemma}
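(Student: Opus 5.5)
The plan is to encode clique-free colorings as independent sets in \(\Gamma\) and apply Lemma~\ref{lem:containers} recursively, stopping at \emph{terminal} containers in which many color lists are short. Write \(m=\binom t2\). By Lemma~\ref{lem:pattern-equivalence}, every \(\varphi\) with \(G_\varphi\) free of \(K_s^{(3)}\) gives an independent set \(I_\varphi\) of \(\Gamma\). For a set \(C\subseteq V(\Gamma)\) and a pair \(e\in\binom{[t]}2\), define the list \(L_e(C):=\{c:(e,c)\in C\}\). The number of \(\varphi\) with \(I_\varphi\subseteq C\) is at most \(\prod_e|L_e(C)|\). Call \(C\) \emph{terminal} if at least \(\varepsilon m\) pairs satisfy \(|L_e(C)|<q^{1-\lambda}\), where \(\lambda=\lambda(s)\) is from Lemma~\ref{lem:robust-auxiliary} and \(\varepsilon>0\) is a small constant fixed below. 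A terminal container then carries at most \(q^{m-\lambda\varepsilon m}\) colorings. So it suffices to cover all \(I_\varphi\) by at most \(q^{\lambda\varepsilon m/2}\) terminal containers, which gives the lemma with \(\eta=\lambda\varepsilon/2\).

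\emph{Step 1 (non-terminal containers are dense).} Suppose \(C\) is not terminal. Let \(B\) be the graph on \([t]\) of pairs \(e\) with \(|L_e(C)|\ge q^{1-\lambda}\). Then \(e(B)\ge(1-\varepsilon)m\). Taking \(\varepsilon<1/(2(s-1))\), Lemma~\ref{lem:supersaturation} with \(r=s\) yields \(\kappa t^s\) \(s\)-sets \(v_1<\dots<v_s\) all of whose pairs lie in \(B\). On such an \(s\)-set we count admissible colorings with \(c_{ij}\in L_{v_iv_j}(C)\). We build them row by row, for \(i=1,\dots,s-2\); row \(i\) requires a transversal \(K_{s-i}\) in the lists of the pairs \(v_iv_j\), \(j>i\).

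Lemma~\ref{lem:robust-auxiliary} gives only existence of a transversal clique. Averaging upgrades this to counting: every choice of subsets of size \(\lceil q^{1-\lambda}\rceil\) of the lists contains a transversal clique. Double counting then gives at least \(\prod_j|L_{v_iv_j}|\cdot q^{-(1-\lambda)(s-i)}\cdot\Omega(1)\) transversal cliques, which is at least \(q^{-\lambda'(s-i)}\prod_j|L_{v_iv_j}|\) up to constants. Multiplying over rows, each good \(s\)-set contributes at least \(q^{h(1-2\lambda)}\) edges to \(\Gamma[C]\) (crudely). Hence
\[
e(\Gamma[C])\ge \kappa' t^s q^{h(1-2\lambda)} .
\]
Since \(|C|\le mq\), the average degree satisfies \(d\gtrsim t^{s-2}q^{h-1-2\lambda h}\).

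\emph{Step 2 (applying containers).} Set \(\tau:=t^{-\delta}/q\) for a small constant \(\delta>0\). By Lemma~\ref{lem:pattern-codegrees}, the condition \(\Delta_\ell\le c_h d\tau^{\ell-1}\) reduces to
\[
C_s t^{s-3}q^{h-\ell}\le c\,t^{s-2}q^{h-1-2\lambda h}\,t^{-\delta(\ell-1)}q^{-(\ell-1)},
\]
that is, \(t^{1-\delta(h-1)}\gg q^{2\lambda h}\). This holds because \(q=\Theta(t^{a})\) with \(a=1/(10h)\), for \(\delta\) small. Start from \(C=V(\Gamma)\). Whenever a current container is not terminal, apply Lemma~\ref{lem:containers} to \(\Gamma[C]\); this is legitimate because Step 1 gives the degree bound and Lemma~\ref{lem:pattern-codegrees} holds for induced subhypergraphs. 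Each application halves the edge count. Edges start at most \(t^sq^h\) and stay above \(\kappa't^sq^{h(1-2\lambda)}\) while non-terminal, so every branch becomes terminal after \(O(\log t)\) rounds. Each round multiplies the number of containers by
\[
\exp\bigl(C_h\,mq\,\tau\log(1/\tau)\bigr)=\exp\bigl(O(t^{2-\delta}\log t)\bigr).
\]
The total is therefore \(\exp(O(t^{2-\delta}\log^2t))=q^{o(m)}\), far below \(q^{\lambda\varepsilon m/2}\).

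\emph{Main obstacle.} The delicate step is Step 1: converting the existence-only robustness of \(F\) into a lower bound on the number of admissible color tuples that loses only a factor \(q^{O(\lambda)}\) per color. The loss must be small enough that the codegree condition of Lemma~\ref{lem:containers} survives with \(\tau\approx 1/q\). I would first try the averaging-over-random-subsets argument sketched above. If the constants misbehave, the fallback is to shrink \(\lambda\) further (any positive value suffices), since only \(q^{2\lambda h}\ll t^{1-o(1)}\) is needed.
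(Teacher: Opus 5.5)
\textbf{Step~1 fails as written, though it can be repaired.} Your architecture matches the paper's: encode clique-free colorings as independent sets $I_\varphi$ of $\Gamma$, iterate Lemma~\ref{lem:containers}, and use supersaturation plus Lemma~\ref{lem:robust-auxiliary} to force $\Omega(m)$ short lists in every terminal container. Your ``non-terminal implies dense'' is the contrapositive of Claim~\ref{Cl:lg-e}.

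The problem is the density bound in Step~1. Your double counting correctly gives at least $\prod_j|L_{v_iv_j}|/\lceil q^{1-\lambda}\rceil^{s-i}$ transversal cliques in row $i$. But this loses a factor of about $q^{1-\lambda}$ per color, not $q^{O(\lambda)}$. The inequality ``$\ge q^{-\lambda'(s-i)}\prod_j|L_{v_iv_j}|$'' holds only with $\lambda'=1-\lambda$. For lists of size near $q^{1-\lambda}$ it guarantees just one clique per row.

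No cleverer argument can save the claim. Lemma~\ref{lem:robust-auxiliary} gives no density information, and its natural output comes from $G(Q,p)$ with $p=cQ^{-2/s}$, so a typical $F$ has edge density of order $q^{-2/s}$. Consider the root $C=V(\Gamma)$, which is non-terminal since all lists are full. Even there, an $s$-set carries only about
\[
\prod_{r=2}^{s-1}q^{r}p^{\binom r2}=q^{h-(s-1)(s-2)/3+o(1)}
\]
admissible colorings. For $s=3$ this is about $2e(F)\approx q^{4/3}$, whereas you claim $q^{2(1-2\lambda)}$, which is larger once $\lambda<1/6$. Shrinking $\lambda$ does not help, because the loss comes from the sparsity of $F$, not from $\lambda$. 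So the bound $e(\Gamma[C])\ge\kappa' t^sq^{h(1-2\lambda)}$, the average-degree estimate built on it, and your codegree verification all rest on a false inequality.

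\textbf{The repair is to drop the counting upgrade entirely.}
\begin{itemize}
\item Existence alone gives one edge of $\Gamma[C]$ per good $s$-set. Distinct $s$-sets give distinct edges, because an edge determines its support. Hence a non-terminal $C$ has $e(\Gamma[C])\ge\kappa t^s$ and average degree $d\ge c\,t^{s-2}/q$.
\item With your $\tau=t^{-\delta}/q$, the condition $\Delta_\ell\le c_h d\tau^{\ell-1}$ from Lemma~\ref{lem:pattern-codegrees} reduces to $q^{h}\le c' t^{1-\delta(\ell-1)}$. This holds since $q^h=O(t^{ah})=O(t^{1/10})$ and $\delta$ is small.
\item The number of rounds is still $O(\log q)$, and the container count is still $\exp(O(t^{2-\delta}\log^2 t))=q^{o(m)}$.
\end{itemize}

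This repaired version is the paper's argument. The choice $a=1/(10h)$ makes the palette so small relative to $t$ that the crude codegree bound already beats a degree of order $t^{s-2}/q$. The paper therefore uses only the existence statement, with $\tau=t^{-2a}$ and terminal threshold $e(\Gamma[C])<\zeta t^s$. The step you flagged as the main obstacle is false as stated, and it is also unnecessary.
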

\begin{proof}
 Set
\[
        m:=\binom t2,
        \qquad
        \rho:=\frac1{2(s-1)}.
\]
By Lemma~\ref{lem:supersaturation}, applied with \(\varepsilon=\rho\), there is a
constant \(\kappa=\kappa(s)>0\) such that every graph \(B\) on \(t\) vertices with
\(e(B)\ge(1-\rho)m\) contains at least \(\kappa t^s\) copies of \(K_s\). Set
\[
        \zeta:=\frac{\kappa}{2},
        \qquad
        \tau:=t^{-2a}.
\]
Since \(\tau\to0\) as \(t\to\infty\), we have
\(\tau\le\tau_h\) for all sufficiently large \(t\).

\medskip
\textbf{Step 1: Container applicability.}\;
We shall apply Lemma~\ref{lem:containers} recursively to induced subhypergraphs
of \(\Gamma=\Gamma_{t,F}\). Let \(C\subseteq V(\Gamma)\) satisfy
\(e(\Gamma[C])\ge \zeta t^s\), and put \(H:=\Gamma[C]\). Let
\(d(H)\) denote the average degree of \(H\). Since
\(|V(H)|\le |V(\Gamma)|=mq\), we have
\[
        d(H)
        =\frac{h e(H)}{|V(H)|}
        \ge
        c_{s,\zeta}\frac{t^{s-2}}q.
\]
As \(q=\Theta(t^a)\), this gives \(d(H)\ge c_s t^{s-2-a}\to\infty\). Thus the condition
\(d(H)\ge d_h\) in Lemma~\ref{lem:containers} holds for all sufficiently large \(t\).

By Lemma~\ref{lem:pattern-codegrees}, we have 
$
        \Delta_\ell(H)
        \le
        C_s t^{s-3}q^{h-\ell}
$ for every \(2\le \ell\le h\).
Consequently,
\[
        \frac{\Delta_\ell(H)}{d(H)\tau^{\ell-1}}
        \le
        C_s t^{-1}q^{h-\ell+1}\tau^{-(\ell-1)}
        \le
        C_s t^{-1+a(h-\ell+1)+2a(\ell-1)} = C_s t^{a(h+\ell-1)-1}.
\]
Since the exponent 
$
a(h+\ell-1)-1\le a(2h-1)-1=\frac{2h-1}{10h}-1<-\frac45,
$
we obtain that
\[
        \max_{2\le \ell\le h}
        \frac{\Delta_\ell(H)}{d(H)\tau^{\ell-1}}
        =O(t^{-4/5})=o(1).
\]
All implicit constants here depend only on the fixed parameter \(s\). Hence, for all sufficiently large \(t\), Lemma~\ref{lem:containers} applies to every induced subhypergraph \(\Gamma[C]\) with \(e(\Gamma[C])\ge \zeta t^s\).

\medskip
\textbf{Step 2: Number of terminal containers.}\;
Starting from \(V(\Gamma)\), apply Lemma~\ref{lem:containers} to every current
container \(C\) satisfying \(e(\Gamma[C])\ge \zeta t^s\), and declare \(C\)
terminal once \(e(\Gamma[C])<\zeta t^s\).  Each application to
\(\Gamma[C]\) produces a family of subcontainers \(C'\subseteq C\) such that
every independent set in \(\Gamma[C]\) is contained in one of them,
$
        e(\Gamma[C'])
        \le
        \frac12 e(\Gamma[C]),
$
and the logarithm of the number of subcontainers is at most
\(C_h|C|\tau\log(1/\tau)\).

Along any branch of this recursive procedure, the number of induced edges is
reduced by a factor of at least \(2\) at each step.  Moreover,
\[
        \frac{e(\Gamma)}{\zeta t^s}
        \le
        \frac{\binom ts q^h}{\zeta t^s}
        \le
        \frac{q^h}{\zeta}.
\]
Consequently, every branch reaches a terminal container after at most
$
        O(\log q)
$
refinement steps.

The covering property is preserved throughout the recursion: if an
independent set \(I\) of \(\Gamma\) is contained in a current container \(C\),
then \(I\) is independent in \(\Gamma[C]\), and hence after applying
Lemma~\ref{lem:containers} it is contained in one of the resulting
subcontainers.  Thus, by induction over the recursion, we obtain a family
\(\mathcal C\) of terminal containers such that every independent set of
\(\Gamma\) is contained in some \(C\in\mathcal C\), and
\[
        e(\Gamma[C])<\zeta t^s
        \qquad\text{for every }C\in\mathcal C.
\]

At each application of Lemma~\ref{lem:containers}, since \(|C|\le |V(\Gamma)|\),
the logarithm of the number of new containers is at most
$
        C_h |V(\Gamma)|\tau\log(1/\tau)
        \le
        C_h m q\tau\log t.
$
Adding this logarithmic contribution over at most \(O(\log q)\) refinement
steps, we obtain
\[
        \log|\mathcal C|
        \le
        O(mq\tau\log t\log q).
\]
Using \(q=\Theta(t^a)\) and \(\tau=t^{-2a}\), we get
$
        mq\tau\log t\log q
        =
        O\bigl(m t^{-a}(\log t)^2\bigr)
        =
        o(m\log q).
$
Thus
\[
        |\mathcal C|=q^{o(m)}.
\]

\medskip
\textbf{Step 3: Many pairs have few available colors.}\;
Fix a terminal container \(C\in\mathcal C\). For each pair \(xy\in\binom{[t]}2\), define
\[
        L_{xy}:=\{c\in V(F):(xy,c)\in C\}.
\]
Call \(xy\) large if \(|L_{xy}|\ge q^{1-\lambda}\), where \(\lambda\) is the constant from Lemma~\ref{lem:robust-auxiliary}. Let \(B_C\) be the graph on vertex set \([t]\) whose edges are precisely the large pairs.

\begin{claim}\label{Cl:lg-e}
For every terminal container \(C\in\mathcal C\),
\[
        e(B_C)<(1-\rho)m.
\]
\end{claim}

\begin{proof}
Suppose, for a contradiction, that \(e(B_C)\ge(1-\rho)m\). By the choice of \(\kappa\), the graph \(B_C\) contains at least \(\kappa t^s\) distinct \(s\)-vertex sets spanning copies of \(K_s\). Let \(v_1<\cdots<v_s\) be one such vertex set. Since every pair \(v_iv_j\) is large, for each \(1\le i\le s-2\), all lists
\[
        L_{v_iv_{i+1}}, L_{v_iv_{i+2}},\ldots,L_{v_iv_s}
\]
have size at least \(q^{1-\lambda}\). Lemma~\ref{lem:robust-auxiliary} gives vertices
\[
        c_{i,i+1}\in L_{v_iv_{i+1}},
        \quad c_{i,i+2}\in L_{v_iv_{i+2}},
        \quad \ldots,
        \quad c_{i,s}\in L_{v_iv_s}
\]
which form a \(K_{s-i}\) in \(F\).

We make these choices separately for \(i=1,\ldots,s-2\). These choices are 
compatible: the row indexed by \(i\) uses precisely the pair positions \(v_iv_j\) with
\(j>i\), and no pair position occurs in two different rows. The resulting color assignment is therefore admissible, and every chosen vertex lies in its
corresponding list.  Hence 
$
        \{(v_iv_j,c_{ij}):1\le i<j\le s,\ i\le s-2\}
$
forms an edge of \(\Gamma[C]\).

Distinct \(s\)-vertex sets spanning copies of \(K_s\) in \(B_C\) give distinct edges of \(\Gamma[C]\). Indeed, the union of the endpoints of the pair positions appearing in such a pattern edge is exactly \(\{v_1,\ldots,v_s\}\), so they determine the underlying \(s\)-set. Therefore
$
        e(\Gamma[C])\ge \kappa t^s>\zeta t^s,
$
contradicting the terminal property of \(C\). This proves the claim.
\end{proof}

\textbf{Step 4: Count bad local colorings.}\;
Fix a terminal container \(C\in\mathcal C\).  Recall that
\[
        I_\varphi=\left\{(xy,\varphi(xy)):xy\in\binom{[t]}2\right\},
        \qquad
        L_{xy}=\{c\in V(F):(xy,c)\in C\}.
\]
For each pair \(xy\in\binom{[t]}2\), the set \(I_\varphi\) contains the
single vertex \((xy,\varphi(xy))\) of \(V(\Gamma)\) lying over \(xy\).
Therefore \(I_\varphi\subseteq C\) if and only if
\(\varphi(xy)\in L_{xy}\) for every \(xy\in\binom{[t]}2\).  Hence the number
of local colorings \(\varphi\) with \(I_\varphi\subseteq C\) is at most
$
        \prod_{xy\in\binom{[t]}2}|L_{xy}|.
$

By Claim~\ref{Cl:lg-e}, at least \(\rho m\) pairs \(xy\in\binom{[t]}2\) satisfy
$
        |L_{xy}|\le q^{1-\lambda}.
$
For all remaining pairs, we use the trivial bound \(|L_{xy}|\le q\).  Therefore
\[
        \prod_{xy\in\binom{[t]}2}|L_{xy}|
        \le
        \bigl(q^{1-\lambda}\bigr)^{\rho m} q^{(1-\rho)m}
        =
        q^{(1-\lambda\rho)m}.
\]
Thus, for each fixed terminal container \(C\), the number of local colorings
\(\varphi\) with \(I_\varphi\subseteq C\) is at most
\(q^{(1-\lambda\rho)m}\).
By Lemma~\ref{lem:pattern-equivalence}, every local coloring \(\varphi\) for which \(G_\varphi\) contains no \(K_s^{(3)}\) gives an independent set \(I_\varphi\) in \(\Gamma\). Since every independent set is contained in some terminal container, the total number of such local colorings is at most
\[
        |\mathcal C|q^{(1-\lambda\rho)m}
        =
        q^{o(m)}q^{(1-\lambda\rho)m}.
\]
Take \(\eta:=\lambda\rho/2\). For all sufficiently large \(t\), the factor \(q^{o(m)}\) is at most \(q^{\eta m}\). Hence the total number of local colorings \(\varphi\) for which \(G_\varphi\) contains no copy of \(K_s^{(3)}\) is at most
\[
        q^{(1-\lambda\rho+\eta)m}
        =
        q^{(1-\eta)m}
        =
        q^{(1-\eta)\binom t2}.
\]
This proves the lemma.
\end{proof}

\begin{proof}[Proof of Theorem~\ref{thm:main-3}]
Let \(\eta\) be given by Lemma~\ref{lem:coloring-count}, and set
\[
        M:=\left\lfloor q^{\eta(t-1)/4}\right\rfloor.
\]
By Claim~\ref{cl:no-large-clique-three}, every \(3\)-graph \(G_\chi\) in the
construction is \(K_{s+1}^{(3)}\)-free.  Fix
\(W=\{w_1<\cdots<w_t\}\subseteq[M]\), and define
\(\varphi_W:\binom{[t]}2\to V(F)\) by
\(\varphi_W(ij)=\chi(w_iw_j)\) for \(1\le i<j\le t\).  Then
\(G_{\varphi_W}\) is naturally isomorphic to \(G_\chi[W]\) via the
order-preserving map \(i\mapsto w_i\).  Since \(\varphi_W\) is uniformly
distributed over all local colorings \(\binom{[t]}2\to V(F)\),
Lemma~\ref{lem:coloring-count} gives
\[
        \Pr\bigl(K_s^{(3)}\nsubseteq G_\chi[W]\bigr)
        \le q^{-\eta\binom t2}.
\]
Consequently, the expected number of \(K_s^{(3)}\)-free \(t\)-sets is at most
\[
        \binom Mt q^{-\eta\binom t2}
        \le
        M^tq^{-\eta t(t-1)/2}
        \le
        q^{\eta t(t-1)/4-\eta t(t-1)/2}<1.
\]
Thus there exists a coloring \(\chi\) for which \(G_\chi\) is
\(K_{s+1}^{(3)}\)-free and every \(t\)-set contains a copy of \(K_s^{(3)}\).
Equivalently, \(\alpha_s(G_\chi)<t\), and hence
\(f^{(3)}_{s,s+1}(M)<t\).

Since \(q=\Theta(t^a)\), we have
$
        \log M=\Theta(t\log t).
$
Given a sufficiently large \(n\), choose
\[
        t=\left\lceil C_s\frac{\log n}{\log\log n}\right\rceil
\]
with \(C_s\) large enough. Then \(\log M\ge \log n\) for all sufficiently
large \(n\), and hence the corresponding value of \(M\) is at least \(n\).
Take an induced subhypergraph \(G'\) of \(G_\chi\) on any \(n\) vertices. It
remains \(K_{s+1}^{(3)}\)-free, and every \(K_s^{(3)}\)-free set in \(G'\) is
also a \(K_s^{(3)}\)-free set in \(G_\chi\). Hence \(\alpha_s(G')<t\), and
therefore after enlarging \(C_s\) if necessary, this yields
\[
        f^{(3)}_{s,s+1}(n)
        \le
        C_s\frac{\log n}{\log\log n}.
\]
This completes the proof of Theorem~\ref{thm:main-3}.
\end{proof}

\section{The \texorpdfstring{\(4\)-uniform}{4-uniform} bound}
\label{sec:4-uniform}

We now prove Theorem~\ref{thm:main-4} using a monotone stepping-up
construction from \(3\)-graphs to \(4\)-graphs.  Starting with a \(3\)-graph on
\([N]\), we pass to the ordered binary cube \(\{0,1\}^N\) and define edges using
the consecutive \(\delta\)-values.  The monotonicity condition preserves the
forbidden clique condition and allows large \(K_s^{(4)}\)-free sets to be
controlled by increasing and decreasing \(\delta\)-chains.  Theorem~\ref{thm:main-3}
then supplies the required bound for the base \(3\)-graph.

\subsection{Binary sequences and monotone chains}

Let $\Omega_N:=\{0,1\}^N$. We order $\Omega_N$ by the value
$
  \operatorname{val}(x):=\sum_{i=1}^Nx_i2^{i-1}.
$
For $x<y$, define
\[
  \delta(x,y):=\max\{i:x_i\ne y_i\}.
\]
Throughout this section, whenever an ordered sequence
\(x_1<\cdots<x_m\) in \(\Omega_N\) is under consideration, we write
\[
        d_i:=\delta(x_i,x_{i+1}),\qquad 1\le i<m.
\]

The following elementary properties of the \(\delta\)-function are
standard in stepping-up arguments and follow directly from its definition;
see, for example, \cite{GRS} for the classical stepping-up framework.

\begin{lemma}\label{lem:delta-properties}
Let $m\ge2$, and let $x_1<\cdots<x_m$ be elements of $\Omega_N$.

\medskip
  (i) If $x<y<z$, then
 $
    \delta(x,z)=\max\{\delta(x,y),\delta(y,z)\}.
 $
  
  \medskip
  (ii) For $1\le i<j\le m$,
  $
    \delta(x_i,x_j)=\max\{d_i,d_{i+1},\ldots,d_{j-1}\}.
  $
  
   \medskip
  (iii) The maximum of $d_1,\ldots,d_{m-1}$ is attained at a unique index.

\end{lemma}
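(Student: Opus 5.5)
The plan is to work directly with the binary expansion and the order on \(\Omega_N\). The key observation is that if \(x<y\) and \(d=\delta(x,y)\), then \(x\) and \(y\) agree in every coordinate above \(d\), and \(x_d=0\), \(y_d=1\). Indeed, since they agree above \(d\), the sign of \(\operatorname{val}(y)-\operatorname{val}(x)\) is decided by coordinate \(d\): the term \(2^{d-1}\) exceeds \(\sum_{i<d}2^{i-1}\). Distinctness of \(x,y\) makes \(\delta\) well defined. I will first record this fact and then derive (i), (ii) and (iii) from it in that order.

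For (i), set \(a=\delta(x,y)\) and \(b=\delta(y,z)\). We cannot have \(a=b\). Otherwise the fact above would give \(y_a=1\) from \(x<y\) and \(y_a=0\) from \(y<z\), which is a contradiction. So suppose first \(a<b\). Then \(x\) agrees with \(y\) at every coordinate \(\ge b\), and \(y\) agrees with \(z\) above \(b\) and differs from it at \(b\). Hence \(x\) and \(z\) agree above \(b\) and differ at \(b\), so \(\delta(x,z)=b\). The case \(a>b\) is symmetric and gives \(\delta(x,z)=a\). In both cases \(\delta(x,z)=\max\{a,b\}\).

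For (ii), I will induct on \(j-i\). The case \(j=i+1\) is the definition of \(d_i\). For the inductive step, apply (i) to the triple \(x_i<x_{j-1}<x_j\), so that \(\delta(x_i,x_j)=\max\{\delta(x_i,x_{j-1}),d_{j-1}\}\), and then use the induction hypothesis on \(\delta(x_i,x_{j-1})\).

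For (iii), suppose the maximum \(D\) is attained at two indices \(i<k\), and choose them to be consecutive occurrences of \(D\). Then \(d_i=D\) forces \((x_{i+1})_D=1\), and \(d_k=D\) forces \((x_k)_D=0\). By (ii), \(\delta(x_{i+1},x_k)=\max\{d_{i+1},\ldots,d_{k-1}\}<D\) (or \(x_{i+1}=x_k\) when \(k=i+1\)). Either way \(x_{i+1}\) and \(x_k\) agree at coordinate \(D\), which is a contradiction.

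I expect no real obstacle here. The only point needing care is the lemma that the top differing coordinate decides the order, together with the \(a\ne b\) step in (i). Everything else follows by induction.
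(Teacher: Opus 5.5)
Your proof is correct and follows essentially the same route as the paper: the fact that the larger string has a \(1\) at the top differing coordinate gives (i), induction gives (ii), and (iii) is the same contradiction at coordinate \(D\). The paper's (iii) compares \(x_i\) with \(x_k\) directly, using \(\delta(x_i,x_k)=D\) to force \((x_k)_D=1\) against \((x_k)_D=0\) from \(d_k=D\), so your choice of consecutive occurrences is harmless but not needed; your write-up of (i) is more detailed than the paper's one-line sketch.
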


\begin{proof}

Part (i) follows by looking at the largest coordinate at which one of the
three strings changes.  Part (ii) follows from repeated use of part (i).
For part (iii), suppose that the maximum value \(d\) is attained at
indices \(i<k\).  By part (ii),
\(\delta(x_i,x_k)=\max\{d_i,\ldots,d_{k-1}\}=d\), so
\((x_k)_d=1\).  But \(d_k=d\) and \(x_k<x_{k+1}\) imply
\((x_k)_d=0\), a contradiction.
\end{proof}

The following elementary lemma is a Pascal-type form of the classical
Erd\H{o}s--Szekeres monotone subsequence argument, adapted to the
\(\delta\)-chains used in the stepping-up construction.  

For every nonempty \(A\subseteq\Omega_N\), let \(I(A)\) be the largest \(p\) for which there are
\(y_0<y_1<\cdots<y_p\) in \(A\) with
\[
        \delta(y_0,y_1)<\delta(y_1,y_2)<\cdots<
        \delta(y_{p-1},y_p).
\]
Thus \(I(A)\) is the maximum number of consecutive \(\delta\)-steps in an
ordered sequence from \(A\) whose \(\delta\)-values are strictly increasing.
Define $D(A)$ in the same way with all inequalities reversed.

\begin{lemma}\label{lem:pascal-chain}
For every nonempty \(A\subseteq \Omega_N\),
\[
        |A|\le \binom{I(A)+D(A)}{I(A)}.
\]
\end{lemma}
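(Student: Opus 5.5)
We argue by induction on \(|A|\), proving the stronger statement that \(|A|\le g(I(A),D(A))\), where \(g(i,j):=\binom{i+j}{i}\). Since \(g\) satisfies Pascal's rule \(g(i,j)=g(i-1,j)+g(i,j-1)\) for \(i,j\ge1\), and \(g(i,0)=g(0,j)=1\), it suffices to split \(A\) into two parts in which one of the two chain parameters drops by one.

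\emph{Base cases.} If \(|A|=1\), then \(I(A)=D(A)=0\) and the bound holds. If \(|A|\ge2\), then any two elements already form a chain with one step, which counts as both increasing and decreasing. Hence \(I(A),D(A)\ge1\) whenever \(|A|\ge2\), and so the degenerate cases \(I=0\) or \(D=0\) only occur for \(|A|=1\).

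\emph{Splitting step.} Suppose \(|A|\ge2\) and write \(A=\{x_1<\cdots<x_m\}\) with consecutive values \(d_i\). By Lemma~\ref{lem:delta-properties}(iii), the maximum of the \(d_i\) is attained at a unique index \(r\); call this value \(d\). Set \(A_1:=\{x_1,\dots,x_r\}\) and \(A_2:=\{x_{r+1},\dots,x_m\}\). Both parts are nonempty, and by Lemma~\ref{lem:delta-properties}(ii), \(\delta(x,y)=d\) for every \(x\in A_1\) and \(y\in A_2\), while every \(\delta\)-value inside \(A_1\) or inside \(A_2\) is strictly less than \(d\).

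\emph{Key claims.} We show \(I(A_1)\le I(A)-1\) and \(D(A_2)\le D(A)-1\).

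For the first, take an increasing chain \(y_0<\cdots<y_p\) in \(A_1\) and append any \(z\in A_2\). The new last step has \(\delta(y_p,z)=d\), which exceeds every earlier step since those lie inside \(A_1\). This gives an increasing chain in \(A\) with \(p+1\) steps, so \(I(A)\ge I(A_1)+1\).

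For the second, take a decreasing chain in \(A_2\) and prepend any \(w\in A_1\). The new first step has \(\delta\)-value \(d\), which exceeds all later steps, so \(D(A)\ge D(A_2)+1\).

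In addition, \(I(A_2)\le I(A)\) and \(D(A_1)\le D(A)\) trivially, since a chain in a subset is a chain in \(A\). One point needs care: a single element has zero steps, so we must check that appending or prepending really adds exactly one step. It does, because the index count simply increments.

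\emph{Conclusion.} By induction and monotonicity of \(g\) in each argument,
\[
|A|=|A_1|+|A_2|\le \binom{I(A)-1+D(A)}{I(A)-1}+\binom{I(A)+D(A)-1}{I(A)}=\binom{I(A)+D(A)}{I(A)}.
\]
Using the induction bound \(|A_1|\le g(I(A_1),D(A_1))\) together with monotonicity is legitimate because \(I(A)-1\ge0\) here.

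The main step requiring attention is the cross-part \(\delta\)-identity, namely that all cross pairs have the value \(d\) and all internal values are smaller. This follows directly from Lemma~\ref{lem:delta-properties}(ii) and (iii), so we expect no real obstacle.
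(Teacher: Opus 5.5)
Your proof is correct and follows the paper's argument essentially step for step. You split \(A\) at the unique maximal consecutive \(\delta\)-value, extend increasing chains of the left part by appending an element of the right part, extend decreasing chains of the right part by prepending an element of the left part, and close with monotonicity of \(\binom{i+j}{i}\) and Pascal's identity.
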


\begin{proof}
We use induction on \(|A|\). The result is clear when \(|A|=1\). Write
\(A=\{x_1<\cdots<x_m\}\), and let the unique largest value among
\(d_1,\ldots,d_{m-1}\) occur at \(d_j\). Put
\[
        A_L:=\{x_1,\ldots,x_j\},\qquad
        A_R:=\{x_{j+1},\ldots,x_m\}.
\]

For every \(z\in A_L\), Lemma~\ref{lem:delta-properties} gives \(\delta(z,x_{j+1})=d_j\). All
\(\delta\)-values occurring inside \(A_L\) are smaller than \(d_j\). Hence every increasing
\(\delta\)-chain in \(A_L\) can be extended by appending \(x_{j+1}\), and therefore
\(I(A_L)\le I(A)-1\). Symmetrically, every decreasing \(\delta\)-chain in \(A_R\)
can be extended to the left by \(x_j\), and hence \(D(A_R)\le D(A)-1\). Also,
\(D(A_L)\le D(A)\) and \(I(A_R)\le I(A)\).

By the induction hypothesis and Pascal's identity,
\[
\begin{aligned}
|A|
=|A_L|+|A_R|  
&\le
\binom{I(A_L)+D(A_L)}{I(A_L)}
+
\binom{I(A_R)+D(A_R)}{I(A_R)} \\
&\le
\binom{I(A)+D(A)-1}{I(A)-1}
+
\binom{I(A)+D(A)-1}{I(A)} \\
&=
\binom{I(A)+D(A)}{I(A)}.
\end{aligned}
\]
This proves the lemma.
\end{proof}

\subsection{The stepping-up construction}

\begin{lemma}\label{lem:stepping-up}
Let \(s\ge4\) and \(N\ge1\).  Put
$
        \rho:=f^{(3)}_{s-1,s}(N).
$
Then
\[
        f^{(4)}_{s,s+1}(2^N)
        \le
        \binom{2\rho}{\rho}
        \le 4^\rho.
\]
\end{lemma}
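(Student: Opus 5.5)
The plan is to build, from an extremal \(3\)-graph on \([N]\), a \(4\)-graph on \(\Omega_N\) whose edges are the \(4\)-sets with monotone consecutive \(\delta\)-values forming an edge of the base graph. Then Lemma~\ref{lem:pascal-chain} controls \(K_s^{(4)}\)-free sets through their increasing and decreasing \(\delta\)-chains.

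\textbf{Setup.} By the definition of \(\rho=f^{(3)}_{s-1,s}(N)\), fix a \(K_s^{(3)}\)-free \(3\)-graph \(G\) on \([N]\) with \(\alpha_{s-1}(G)=\rho\). Thus every \(K_{s-1}^{(3)}\)-free subset of \([N]\) has at most \(\rho\) elements. Define a \(4\)-graph \(H\) on \(\Omega_N\), so \(|V(H)|=2^N\). For \(x_1<x_2<x_3<x_4\) with consecutive values \(d_1,d_2,d_3\), declare \(x_1x_2x_3x_4\in E(H)\) if and only if
\begin{itemize}
\item either \(d_1<d_2<d_3\) or \(d_1>d_2>d_3\), and
\item \(\{d_1,d_2,d_3\}\in E(G)\).
\end{itemize}

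\textbf{Key observation.} Let \(x_1<\cdots<x_r\) have strictly increasing consecutive values \(d_1<\cdots<d_{r-1}\). By Lemma~\ref{lem:delta-properties}(ii), for \(i<j<k<l\) the consecutive \(\delta\)-values of the sub-quadruple are
\[
\delta(x_i,x_j)=d_{j-1},\qquad \delta(x_j,x_k)=d_{k-1},\qquad \delta(x_k,x_l)=d_{l-1}.
\]
These are again increasing, and as \((i,j,k,l)\) ranges over all quadruples the triple \(\{j-1,k-1,l-1\}\) ranges over all \(3\)-subsets of \([r-1]\). Hence \(x_1,\ldots,x_r\) span \(K_r^{(4)}\) in \(H\) if and only if \(\{d_1,\ldots,d_{r-1}\}\) spans \(K_{r-1}^{(3)}\) in \(G\). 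The decreasing case is symmetric: there the values are \(\delta(x_i,x_j)=d_i\), and so on.

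\textbf{\(H\) is \(K_{s+1}^{(4)}\)-free.} Suppose \(x_1<\cdots<x_{s+1}\) span a \(K_{s+1}^{(4)}\). Each consecutive quadruple \(x_i,\ldots,x_{i+3}\) is an edge, so each triple \(d_i,d_{i+1},d_{i+2}\) is strictly monotone. Consecutive triples overlap in the pair \(d_{i+1},d_{i+2}\), so they all have the same direction, and the whole sequence \(d_1,\ldots,d_s\) is strictly monotone. By the key observation, \(\{d_1,\ldots,d_s\}\) spans \(K_s^{(3)}\) in \(G\), a contradiction.

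\textbf{Bounding a \(K_s^{(4)}\)-free set \(A\subseteq\Omega_N\).} We show \(I(A)\le\rho\) and \(D(A)\le\rho\).

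\emph{Increasing chains.} Take \(y_0<\cdots<y_p\) in \(A\) with increasing \(\delta\)-values \(e_1<\cdots<e_p\). We claim \(\{e_1,\ldots,e_p\}\) is \(K_{s-1}^{(3)}\)-free in \(G\), which gives \(p\le\rho\). If instead \(e_{a_1}<\cdots<e_{a_{s-1}}\) spanned \(K_{s-1}^{(3)}\), consider the \(s\) vertices
\[
y_{a_1-1},\ y_{a_1},\ y_{a_2},\ \ldots,\ y_{a_{s-1}}.
\]
By Lemma~\ref{lem:delta-properties}(ii) and the monotonicity of the \(e\)'s, their consecutive \(\delta\)-values are exactly \(e_{a_1}<\cdots<e_{a_{s-1}}\). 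The key observation then yields a \(K_s^{(4)}\) inside \(A\), a contradiction.

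\emph{Decreasing chains.} For a decreasing chain, use instead the \(s\) vertices
\[
y_{a_1-1},\ y_{a_2-1},\ \ldots,\ y_{a_{s-1}-1},\ y_{a_{s-1}},
\]
whose consecutive \(\delta\)-values are \(e_{a_1}>\cdots>e_{a_{s-1}}\). The same argument gives the contradiction.

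\textbf{Conclusion.} By Lemma~\ref{lem:pascal-chain}, and since \(\binom{i+d}{i}\) is nondecreasing in both \(i\) and \(d\), we get \(|A|\le\binom{2\rho}{\rho}\le 4^\rho\). Hence \(\alpha_s(H)\le\binom{2\rho}{\rho}\) and \(H\) is \(K_{s+1}^{(4)}\)-free on \(2^N\) vertices, which proves the lemma.

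\textbf{Main obstacle.} The only delicate point is verifying that the chosen \(s\)-vertex subsequences have exactly the claimed consecutive \(\delta\)-values, including the extra endpoint vertex. This follows from Lemma~\ref{lem:delta-properties}(ii) and the strict monotonicity of the chain.
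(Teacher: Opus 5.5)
Your proof is correct and follows the paper's argument essentially step for step. It uses the same monotone $\delta$-value construction on $\Omega_N$, the same overlap argument forcing a fully monotone $\delta$-sequence in a putative $K_{s+1}^{(4)}$, the same choice of $s$ subsequence vertices for increasing and decreasing chains, and the same appeal to Lemma~\ref{lem:pascal-chain}. The only differences are presentational: you state once, and then reuse, the biconditional ``a strictly monotone $\delta$-sequence spans $K_r^{(4)}$ in $H$ iff its $\delta$-values span $K_{r-1}^{(3)}$ in $G$'', and you make explicit the monotonicity of $\binom{i+d}{i}$ that the paper uses tacitly.
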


\begin{proof}
By the definition of $\rho$, there exists an $N$-vertex $3$-graph $G$ such that $K_s^{(3)}\nsubseteq G$, while every subset of more than $\rho$ vertices contains a copy of $K_{s-1}^{(3)}$. We now construct a $4$-graph $H$ on the vertex set $\Omega_N=\{0,1\}^N$. For four vertices $x_1<x_2<x_3<x_4$ in the usual order, we declare $x_1x_2x_3x_4$ to be an edge of $H$ if and only if the three consecutive $\delta$-values $d_1,d_2,d_3$ (where $d_i=\delta(x_i,x_{i+1})$) are either strictly increasing or strictly decreasing, and moreover $\{d_1,d_2,d_3\}\in E(G)$.

We first verify that $H$ contains no $K_{s+1}^{(4)}$. Assume for contradiction that $x_1<\cdots<x_{s+1}$ span a $K_{s+1}^{(4)}$ in $H$. Then every three consecutive terms $d_i,d_{i+1},d_{i+2}$ must be monotone (either all increasing or all decreasing). If two consecutive triples had opposite directions, then the shared values $d_{i+1},d_{i+2}$ would have to satisfy both $d_{i+1}<d_{i+2}$ and $d_{i+1}>d_{i+2}$, which is impossible. Hence all triples have the same orientation, so we have either
\[
d_1<d_2<\cdots<d_s
\qquad
\text{or}
\qquad
d_1>d_2>\cdots>d_s.
\]
Consider first the increasing case. For any $1\le i<j<k\le s$, look at the four vertices $x_i,x_{i+1},x_{j+1},x_{k+1}$. By Lemma~\ref{lem:delta-properties}, their consecutive $\delta$-values are exactly $d_i,d_j,d_k$. Since these four vertices form an edge of $H$, we must have $\{d_i,d_j,d_k\}\in E(G)$. Therefore the set $\{d_1,\ldots,d_s\}$ spans a copy of $K_s^{(3)}$ in $G$, contradicting the choice of $G$. The decreasing case is symmetric: we use the four vertices $x_i,x_j,x_k,x_{k+1}$ instead, and the same argument yields a contradiction. Thus $H$ is indeed $K_{s+1}^{(4)}$-free.

Now let $\emptyset\ne A\subseteq V(H)$ be such that $H[A]$ contains no copy of $K_s^{(4)}$. We claim that both $I(A)\le \rho$ and $D(A)\le \rho$. 

To prove $I(A)\le \rho$, take an increasing $\delta$-chain $y_0<y_1<\cdots<y_p$ inside $A$, and set $e_i:=\delta(y_{i-1},y_i)$ for $1\le i\le p$. Then we have
$
e_1<e_2<\cdots<e_p.
$
We shall show that $\{e_1,\dots,e_p\}$ cannot contain a copy of $K_{s-1}^{(3)}$ in $G$. Suppose otherwise, and let such a copy be indexed by $e_{i_1},e_{i_2},\ldots,e_{i_{s-1}}$ with $i_1<\cdots<i_{s-1}$. Consider the $s$ vertices
\[
y_{i_1-1},\; y_{i_1},\; y_{i_2},\; \ldots,\; y_{i_{s-1}}.
\]
Because the \(e_i\)'s are strictly increasing, for each pair of
consecutive selected vertices, the maximum of the intervening
\(e\)-values is the last one. Hence the consecutive $\delta$-values are exactly
$
e_{i_1}, e_{i_2}, \ldots, e_{i_{s-1}}.
$
Now take any four of these selected vertices. Their three consecutive $\delta$-values are of the form $e_{i_\alpha}, e_{i_\beta}, e_{i_\gamma}$ with $1\le \alpha<\beta<\gamma\le s-1$, and they are strictly increasing. Since the set $\{e_{i_1},\dots,e_{i_{s-1}}\}$ spans a $K_{s-1}^{(3)}$ in $G$, the triple $\{e_{i_\alpha},e_{i_\beta},e_{i_\gamma}\}$ is an edge of $G$. By the definition of $H$, every four of the chosen vertices form an edge of $H$. Thus these $s$ vertices span a copy of $K_s^{(4)}$ in $H$, contradicting the assumption on $A$. Therefore \(\{e_1,\ldots,e_p\}\) is \(K_{s-1}^{(3)}\)-free in \(G\).
Since the \(e_i\)'s are pairwise distinct, the choice of \(G\) implies that
\(p\le\rho\). Hence \(I(A)\le\rho\).

The proof of $D(A)\le \rho$ is analogous. Suppose we have a decreasing chain with $e_1>e_2>\cdots>e_p$, and assume that $e_{i_1},\ldots,e_{i_{s-1}}$ span a $K_{s-1}^{(3)}$ in $G$. This time select the $s$ vertices
\[
y_{i_1-1},\; y_{i_2-1},\; \ldots,\; y_{i_{s-1}-1},\; y_{i_{s-1}}.
\]
Since the $e_i$'s are strictly decreasing, for each pair of consecutive selected vertices, the maximum of the
intervening \(e\)-values is the first one. Hence the consecutive $\delta$-values are again
$
e_{i_1}, e_{i_2}, \ldots, e_{i_{s-1}},
$
but now in strictly decreasing order. Consequently, any four of these vertices have three consecutive $\delta$-values $e_{i_\alpha},e_{i_\beta},e_{i_\gamma}$ with $\alpha<\beta<\gamma$, which are strictly decreasing. Since the corresponding triple is an edge of $G$, every four of the chosen vertices form an edge of $H$, yielding a copy of $K_s^{(4)}$ in $H$, again a contradiction. Hence $D(A)\le \rho$.

Finally, applying Lemma~\ref{lem:pascal-chain} yields
\[
|A| \le \binom{I(A)+D(A)}{I(A)} \le \binom{2\rho}{\rho} \le 4^\rho.
\]
Since this holds for every $K_s^{(4)}$-free set $A\subseteq V(H)$, the lemma is proved.
\end{proof}

\begin{proof}[Proof of Theorem~\ref{thm:main-4}]
Fix $s\ge4$, and set $N:=\lceil\log_2 n\rceil$. By Theorem~\ref{thm:main-3}, we have
\[
\rho:=f^{(3)}_{s-1,s}(N) \le C_s\frac{\log N}{\log\log N}.
\]
Lemma~\ref{lem:stepping-up} then provides a $K_{s+1}^{(4)}$-free $4$-graph $H$ on $2^N$ vertices in which every $K_s^{(4)}$-free set has size at most $4^\rho$. Since $n\le 2^N$, we may take an induced subhypergraph $H'$ of $H$ on $n$ vertices. It remains $K_{s+1}^{(4)}$-free, and any $K_s^{(4)}$-free set in $H'$ is also $K_s^{(4)}$-free in $H$. Therefore
\[
f^{(4)}_{s,s+1}(n) \le 4^\rho \le \exp\!\left(C_s'\frac{\log N}{\log\log N}\right).
\]
As $N=\Theta(\log n)$, we have
\[
\frac{\log N}{\log\log N} = O\!\left(\frac{\log\log n}{\log\log\log n}\right),
\]
so
\[
f^{(4)}_{s,s+1}(n) \le \exp\!\left(C_s''\frac{\log\log n}{\log\log\log n}\right).
\]
Finally,
\[
\exp\!\left(C_s''\frac{\log\log n}{\log\log\log n}\right)
= (\log n)^{C_s''/\log\log\log n}
= (\log n)^{o(1)}.
\]
This completes the proof of Theorem~\ref{thm:main-4}.
\end{proof}

\subsection{Higher-uniform consequences}
\label{subsec:higher}

We conclude by deriving Corollary~\ref{cor:main-k}.  The starting point is
Theorem~\ref{thm:main-4} in the case \(s=5\), which we combine with the
following stepping-up recursions of Fan, Hu, Lin and Lu~\cite[Theorems~4.5
and~4.6]{FHLL}.

\begin{lemma}[Fan, Hu, Lin and Lu~\cite{FHLL}]
\label{lem:FHLL-step}
There exists a constant \(C>0\), and for every \(k\ge5\) a constant
\(C_k>0\), such that, for every \(N\ge1\),
\[
        f_{6,7}^{(5)}(2^N)
        \le
        C\bigl(f_{5,6}^{(4)}(N)\bigr)^2,
        \qquad
        f_{k+2,k+3}^{(k+1)}(2^N)
        \le
        C_k f_{k+1,k+2}^{(k)}(N).
\]
\end{lemma}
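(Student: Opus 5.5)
The plan is to prove both inequalities by a stepping-up construction of the same kind as Lemma~\ref{lem:stepping-up}, now from uniformity \(k\) to \(k+1\). We have to beat the exponential loss \(\binom{2\rho}{\rho}\) that Lemma~\ref{lem:pascal-chain} would give, and get a polynomial (for \(k=4\)) or linear (for \(k\ge5\)) dependence. For that, the \((k+1)\)-graph must contain edges whose \(\delta\)-sequence is \emph{not} monotone, following the classical Erd\H{o}s--Hajnal stepping-up. This is the one place where the proposal is genuinely uncertain: I have not pinned down the non-monotone edge rule, and I indicate below what I would try.

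\textbf{Construction.} Let \(\rho:=f^{(k)}_{k+1,k+2}(N)\), and fix a \(K_{k+2}^{(k)}\)-free \(k\)-graph \(G\) on \([N]\) with \(\alpha_{k+1}(G)=\rho\). Define a \((k+1)\)-graph \(H\) on \(\Omega_N\). For \(x_1<\cdots<x_{k+1}\) with \(d_i=\delta(x_i,x_{i+1})\), declare \(x_1\cdots x_{k+1}\in E(H)\) in two cases.
\begin{enumerate}[(a)]
\item The sequence \(d_1,\ldots,d_k\) is strictly monotone and \(\{d_1,\ldots,d_k\}\in E(G)\).
\item The sequence \(d_1,\ldots,d_k\) has a prescribed non-monotone shape, for instance exactly one local extremum in a fixed position class.
\end{enumerate}

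\textbf{\(K_{k+3}^{(k+1)}\)-freeness.} Suppose \(x_1<\cdots<x_{k+3}\) span a clique, and consider \(d_1,\ldots,d_{k+2}\). By Lemma~\ref{lem:delta-properties}, every \((k+1)\)-subset of these vertices has \(\delta\)-sequence equal to a sequence of interval maxima of the \(d_i\). I would choose the shape in (b) so that it cannot hold simultaneously for all \((k+1)\)-subsets unless the whole sequence \(d_1,\ldots,d_{k+2}\) is monotone. The hypothesis \(k\ge5\) gives enough room to make this parity/extremum argument work.

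Once the sequence is monotone, every \(k\)-subset of \(\{d_1,\ldots,d_{k+2}\}\) arises as the \(\delta\)-set of a suitable \((k+1)\)-subset. This uses the same choice of vertices as in the proof of Lemma~\ref{lem:stepping-up}: \(x_i,x_{i+1},x_{j+1},\ldots\) in the increasing case, and the mirror choice in the decreasing case. Hence \(\{d_1,\ldots,d_{k+2}\}\) would span a \(K_{k+2}^{(k)}\) in \(G\), a contradiction.

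\textbf{Size of \(K_{k+2}^{(k+1)}\)-free sets.} Let \(A\) be \(K_{k+2}^{(k+1)}\)-free, and write its elements as \(x_1<\cdots<x_m\) with consecutive values \(d_i\).

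First, any strictly monotone run of length \(>\rho\) among the \(d_i\) contains a \(K_{k+1}^{(k)}\) of \(G\). Selecting vertices as in Lemma~\ref{lem:stepping-up} then turns it into a \(K_{k+2}^{(k+1)}\) built from type (a) edges. So every monotone run has length at most \(\rho\).

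Second, if the \(d\)-sequence has many local extrema, say at least a constant \(c_k\) of them, I would extract a subsequence realizing only type (b) patterns. This should produce a \(K_{k+2}^{(k+1)}\) directly.

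Combining the two observations, \(m\le C_k\rho\): there are boundedly many monotone runs, each of length \(O(\rho)\).

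\textbf{The case \(k=4\).} In the first inequality, with \(k=4\), there is too little room to force a bounded number of extrema. Instead I would obtain the bound \(m\le C\rho^2\) through a two-level decomposition. Split \(A\) at the maximal \(\delta\)-value, as in Lemma~\ref{lem:pascal-chain}. Then control, on each side, a monotone run of length \(\le\rho\), where each element of the run carries a block whose own monotone runs also have length \(\le\rho\). The result is a product bound \(C\rho\cdot\rho\) rather than a binomial one.

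\textbf{Main obstacle.} The hardest step is designing the type (b) rule. It must be weak enough that a clique on \(k+3\) vertices is forced back to the monotone case, and strong enough that many local extrema force a \(K_{k+2}^{(k+1)}\). I would first try the classical rule: the extremum at position \(i\) is a local maximum and \(d_1<d_k\), or the analogous condition. I would then check all \(\binom{k+3}{k+1}\) subsequence patterns by hand for \(k=5\) before generalizing.
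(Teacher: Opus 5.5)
The paper does not prove this lemma. Both inequalities are quoted from Fan, Hu, Lin and Lu \cite[Theorems~4.5 and~4.6]{FHLL} and used as a black box in the proof of Corollary~\ref{cor:main-k}. As a self-contained argument, your proposal has a genuine gap, and it sits exactly where the content of the lemma lies.

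What you actually establish is the monotone argument of Lemma~\ref{lem:stepping-up}: a strictly monotone run of more than \(\rho\) consecutive \(d_i\)'s yields a \(K_{k+2}^{(k+1)}\) from type (a) edges, and a \(K_{k+3}^{(k+1)}\) with monotone \(\delta\)-sequence yields a \(K_{k+2}^{(k)}\) in \(G\). By itself this only leads to the exponential bound \(\binom{2\rho}{\rho}\) of Lemma~\ref{lem:pascal-chain}. Everything that should improve this to \(C_k\rho\) or \(C\rho^2\) depends on rule (b), which you never define. So the following are all unproved; each is phrased as something you ``would'' arrange:
\begin{itemize}
\item \(K_{k+3}^{(k+1)}\)-freeness in the non-monotone case;
\item the claim that \(c_k\) local extrema force a \(K_{k+2}^{(k+1)}\);
\item the two-level product bound for \(k=4\).
\end{itemize}

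This is not a routine omission, because the two requirements on (b) pull against each other and the obvious choices fail. Take, for instance, ``exactly one local extremum, a maximum, not at either end''. Suppose \(x_1<\cdots<x_{k+3}\) satisfy \(d_1<d_2<d_3<d_4>d_5>\cdots>d_{k+2}\). By Lemma~\ref{lem:delta-properties}, every \((k+1)\)-subset then has a strictly unimodal \(\delta\)-sequence. Its peak is in position \(a\), where \(a\in\{2,3,4\}\) is the number of surviving vertices among \(x_1,\dots,x_4\). For \(k\ge5\) this is never an end position. Hence all \((k+1)\)-subsets are type (b) edges, and these \(k+3\) vertices span a \(K_{k+3}^{(k+1)}\) whatever \(G\) is.

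Restricting the admissible positions to block such configurations makes your second step correspondingly harder. Deleting \(x_1\) or the last vertex shifts every position by one. Deleting an interior vertex replaces two adjacent \(\delta\)-values by their maximum, which can absorb a local minimum. So ``many local extrema in \(A\)'' does not by itself give a \((k+2)\)-set all of whose \((k+1)\)-subsets have admissible shape.

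Similarly, for \(k=4\), splitting at the maximal \(\delta\)-value merely reproduces the Pascal recursion. It yields \(C\rho^2\) only if some explicitly specified non-monotone edges cut one branch short, and you do not say which edges do this. To complete the argument you would need to give the actual non-monotone edge rule and carry out the full case analysis, which is precisely the content of the cited theorems.
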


\begin{proof}[Proof of Corollary~\ref{cor:main-k}]
Fix an integer \(k\ge5\).  Put
\[
        n_0:=n,
        \qquad
        n_i:=\left\lceil \log_2 n_{i-1}\right\rceil
        \quad (1\le i\le k-4).
\]
Then, for each fixed \(i\),
\[
        n_i=\Theta(\log_{(i)}n),
        \qquad
        n_{i-1}\le 2^{n_i}.
\]
We shall use the following simple monotonicity observation.  If a construction
on \(2^{n_i}\) vertices gives an upper bound for a function \(f\), then by
restricting to an induced subhypergraph on \(n_{i-1}\) vertices, the same upper
bound holds with \(2^{n_i}\) replaced by \(n_{i-1}\).  Indeed, the forbidden
clique condition is preserved under induced subhypergraphs, and the largest
\(K_s\)-free set cannot increase.

Iterating the second inequality in Lemma~\ref{lem:FHLL-step} \(k-5\) times
(with no iteration when \(k=5\)), and then applying the first inequality once,
we get
\[
        f_{k+1,k+2}^{(k)}(n)
        \le
        C_k\left(f_{5,6}^{(4)}(n_{k-4})\right)^2.
\]
Indeed, each linear step lowers the uniformity and the two clique parameters
by one, while the final \(4\)-to-\(5\) step contributes the square.

By Theorem~\ref{thm:main-4} with \(s=5\),
\[
        f_{5,6}^{(4)}(n_{k-4})
        \le
        \exp\left(
        C
        \frac{\log\log n_{k-4}}
             {\log\log\log n_{k-4}}
        \right).
\]
Since \(n_{k-4}=\Theta(\log_{(k-4)}n)\), the last exponent is
\(O_k(\log_{(k-2)}n/\log_{(k-1)}n)\).  Absorbing the square and all fixed
constants into \(C_k\), we obtain
\[
        f_{k+1,k+2}^{(k)}(n)
        \le
        \exp\left(
        C_k
        \frac{\log_{(k-2)} n}
             {\log_{(k-1)} n}
        \right)
        =
        \left(\log_{(k-3)}n\right)^{o(1)}.
\]
This completes the proof.
\end{proof}

\section{Concluding remarks}
\label{sec:conclusion}

We have proved that \(f_{s,s+1}^{(4)}(n)=(\log n)^{o(1)}\) for every fixed
\(s\ge4\), resolving Problem~\ref{pbm} of Conlon, Fox and Sudakov.  Together
with the stepping-up recursions of Fan, Hu, Lin and Lu~\cite{FHLL}, this also
yields the higher-uniform consequence in Corollary~\ref{cor:main-k}.

Recall that the general lower bound of Conlon, Fox and
Sudakov~\cite{CFS14}, in particular when \(k=4\), gives
\[
        f_{s,s+1}^{(4)}(n)
        \ge
        (\log\log n)^{1/3-o(1)}
\]
for every fixed \(s\ge4\).  Thus the known lower bound already lies on a
polynomial scale in \(\log\log n\).  For \(s=4\), a polynomial upper bound
in \(\log\log n\) follows from the recent Ramsey lower bounds discussed in
the introduction.  This leaves the following natural question for every
fixed \(s\ge5\).

\begin{problem}\label{pbm-LN}
For every fixed \(s\ge5\), does there exist a constant \(C_s>0\) such that
\[
        f_{s,s+1}^{(4)}(n)
        \le
        (\log\log n)^{C_s}
\]
for all sufficiently large \(n\)?
\end{problem}

The present monotone stepping-up argument incurs an exponential loss in
the \(3\)-uniform parameter and therefore does not yield a polynomial
bound in \(\log\log n\).  Reaching this scale appears to require a more
efficient lifting mechanism or a direct \(4\)-uniform construction.

As discussed in the introduction, an affirmative answer to
Problem~\ref{pbm-LN} for \(s=5\) would, together with
Lemma~\ref{lem:FHLL-step}, settle Conjecture~\ref{conj:MS} in full.

\end{document}